\newtheorem{theorem}{Theorem}[section]
\newtheorem{lemma}{Lemma}[section]
\newtheorem{definition}{Definition}[section]
\theoremstyle{remark}
\newtheorem{remark}[theorem]{Remark}
\newcommand{\wto}{\rightharpoonup}
\newcommand{\tq}{\tau_q}
\newcommand{\tqa}{\tq^\alpha}
\newcommand{\tT}{\tau_T}
\newcommand{\half}{\frac{1}{2}}
\newcommand{\nrm}[1]{\left\| #1 \right\|}
\newcommand{\abs}[1]{\left\lvert #1 \right\rvert}
\DeclareMathOperator*{\esssup}{ess\,sup}
\DeclareMathOperator{\argument}{arg}
\newcommand{\g}[1]{{g_{#1}}}
\newcommand{\ggamma}{\g{\gamma}}    
\newcommand{\galpha}{\g{\alpha}}
\newcommand{\ggammamin}{\g{\gamma-1}} 
\newcommand {\scal}[2]{\left(#1,#2\right)}
\newcommand{\inpX}[3]{\left\langle #1,#2 \right\rangle_{#3}}
\newcommand{\inpd}[2]{\inpX{#1}{#2}{{\hko{1}}^\ast \times \hko{1}}}
\DeclareMathOperator{\di}{d\hspace{-1.5pt}}
\newcommand {\dt}{ \di t}
\newcommand{\ds}{\di s}
\newcommand{\dtau}{\di \tau}
\newcommand {\CC } {{\mathbb C}} 
\newcommand {\NN } {{\mathbb N}}
\newcommand {\RR } {{\mathbb R}}
\newcommand{\veps}{\varepsilon}
\newcommand {\diver } {\nabla \cdot }
\newcommand {\pdt}{\partial_t}
\newcommand {\pdtt}{\partial_{tt}}
\newcommand {\I}{[0,T]}
\newcommand {\Iopen}{(0,T)}
\newcommand {\Iclosed}{[0,T]}
\newcommand {\domain}{\Omega}
\newcommand {\lp}[1]{\Leb^{#1} (\domain )}
\newcommand {\Lp}[1]{{\bf L}^{#1} (\domain )}
\DeclareMathOperator{\Leb}{L}
\DeclareMathOperator{\Cont}{C}
\DeclareMathOperator{\Hi}{H}
\newcommand {\hk}[1]{\Hi^{#1}(\domain )}
\newcommand {\hko}[1]{\Hi^{#1}_0(\domain )}
\newcommand {\lpIlp}{\Leb^2\left(\Iopen,\lp{2}\right)}
\newcommand {\lpkIX}[2]{\Leb^{#1}\left(\Iopen,#2\right)}
\newcommand {\cIX}[1]{\Cont\left(\I,#1\right)}
\def \vector#1{\mathbf{#1}}
\def \matrix#1{\mbox{\boldmath ${#1}$}}
\newcommand {\X}{\vector{x}}
\newcommand{\quand}{\quad \text{ and } \quad}
\begin{document}

	\title[Fractional SPL equation]{Existence and uniqueness of a weak solution to fractional single-phase-lag heat equation}
	
	\author[F.~Maes]{Frederick Maes$^1$}

	\author[K.~Van~Bockstal]{Karel Van Bockstal$^2$} 
	\thanks{The work of K.~Van Bockstal was supported by the Methusalem programme of Ghent University Special Research Fund (BOF) (Grant Number 01M01021)} 
	
	\address[1]{Research Group NaM$^2,$ Department of Electronics and Information Systems\\ Ghent University \\ Krijgslaan 281 \\ B 9000 Ghent \\ Belgium}
	\email{frederick.maes@UGent.be}
	\address[2]{Ghent Analysis \& PDE center, Department of Mathematics: Analysis, Logic and Discrete Mathematics\\ Ghent University\\
		Krijgslaan 281\\ B 9000 Ghent\\ Belgium} 
	\email{karel.vanbockstal@UGent.be}

	\subjclass[2020]{35A01, 35A02, 35A15, 35R11, 65M12, 33E12}
	\keywords{fractional heat equation, single-phase-lag, time discretisation, existence, uniqueness, Rothe's method, multinomial Mittag-Leffler function}
	
	\begin{abstract} In this article, we study the existence and uniqueness of a weak solution to the fractional single-phase lag heat equation. This model contains the terms $\mathcal{D}_t^\alpha(\pdt u)$ and $\mathcal{D}_t^\alpha u $ (with $\alpha \in(0,1)$), where $\mathcal{D}_t^\alpha$ denotes the Caputo fractional derivative in time of constant order $\alpha\in(0,1)$. We consider homogeneous Dirichlet boundary data for the temperature. We rigorously show the existence of a unique weak solution under low regularity assumptions on the data. Our main strategy is to use the variational formulation and a semidiscretisation in time based on Rothe's method. We obtain a priori estimates on the discrete solutions and show convergence of the Rothe functions to a weak solution. The variational approach is employed to show the uniqueness of this weak solution to the problem. We also consider the one-dimensional problem and derive a representation formula for the solution. We establish bounds on this explicit solution and its time derivative by extending properties of the multinomial Mittag-Leffler function.  
	\end{abstract}
	
	\maketitle
	
	\tableofcontents
	\section{Introduction}\label{sec_intro}
	\subsection{Problem formulation} \label{subsec_Problem}
	Consider a material contained in a bounded domain $\domain\subset \mathbb{R}^d, d\in\mathbb{N}$ with a Lipschitz continuous boundary $\partial \domain.$ Set $Q_T :=\domain \times (0,T]$ and $\Sigma_T:= \Gamma \times (0,T],$ where $T >0$ denotes the final time. The function $u(\X,t)$ represents the temperature at a material point $\X \in \domain$ at time $t.$ The heat flux is denoted by $\mathbf{q}(\X,t).$ Let $\rho$ and $c$ be positive constants denoting the material's density and specific heat, respectively. Moreover, let $\matrix{k}$ be the thermal conductivity, which may be space-dependent.
	
	In \cite{Tzou1995}, a modification of the classical Fourier law for heat conduction processes,
	\begin{equation}
		\label{eq:tzoumodel_simp}
		\mathbf{q}(\X,t+\tq) = -\matrix{k}(\X) \nabla u (\X, t+\tT),
	\end{equation}
	was proposed to overcome the presence of infinite speed of heat propagation. This model links the heat flux $\mathbf{q}$ to the temperature gradient $\nabla u$ at the spatial point $\X$ at different times, allowing for a delay in the build-up of the heat flux or temperature gradient. Here $\tq$ and $\tT$ are the phase-lag parameters. These parameters are material properties and represent the relaxation and delay time, respectively. In case $\tq>0$ and $\tT = 0,$ the law is of \emph{single-phase-lag} (SPL) type. If $\tq>0$ and $\tT>0$, the law is called to be of \emph{dual-phase-lag} (DPL) type. The classical Fourier law is recovered if both lagging parameters equal zero. 
	
	In this contribution, we consider the model obtained by first-order Taylor approximation of \eqref{eq:tzoumodel_simp} if $\tT=0$, where we neglect the higher order terms and replace the time derivative by its fractional variant of Caputo type with order $\alpha \in(0,1)$. Hence, the adapted form of \eqref{eq:tzoumodel_simp} reads as
	\begin{equation}
		\label{eq:fracFourier}
		\left(1+\tq^\alpha \mathcal{D}_t^\alpha\right) \mathbf{q}(\X,t) = -\matrix{k}(\X)\nabla u (\X,t),
	\end{equation}
	where the parameter $\tq^\alpha$ is introduced to maintain the dimensions. The Caputo derivative $\mathcal{D}_t^\gamma$ of order $\gamma\in(0,1)$  is defined by \cite{Caputo1967,AtanackovicPilipovic2014book}
	\begin{equation*} 
		\label{eq:caputo}
		\mathcal{D}_t^\gamma f(t)= \partial_t \int_0^t \frac{(t-s)^{-\gamma}}{\Gamma(1-\gamma)} \left( f(s) - f(0)\right) \ds, \quad t \in [0,T],
	\end{equation*} 
	where $\Gamma$ denotes the Gamma function. 
	The energy conservation law is given as in the classical case
	\begin{equation}
		\label{eq:energy_conservation}
		\rho c \pdt u(\X,t) + \diver \mathbf{q}(\X,t) = G(\X,t),
	\end{equation} 
	where $G$ involves the heat sources and sinks. 
	Following \cite{SlodickaMaes,SINGH20112316,KUMAR201749}, we can write
	\begin{equation}\label{eq:source}
		\left(1+\tq^\alpha \mathcal{D}_t^\alpha\right) G(\X,t) = F(\X,t) - a\left(1+\tq^\alpha \mathcal{D}_t^\alpha\right) u(\X,t),
	\end{equation}
	where $a\geq 0$ is a constant and $F$ is a function representing the sources. Applying the operator $\mathbb{I}+\tq^\alpha \mathcal{D}_t^\alpha$ on \eqref{eq:energy_conservation}, and employing \eqref{eq:fracFourier} and \eqref{eq:source}, we obtain the following equation
	\begin{equation}
		\label{eq:1}
		\rho c \tq^\alpha \mathcal{D}_t^\alpha \pdt u (\X,t)  + a \tq^\alpha \mathcal{D}_t^\alpha u(\X,t)  +  \rho c \pdt u(\X,t) + \mathfrak{L}u(\X,t) = F(\X,t),
	\end{equation}
	for $(\X,t) \in Q_T,$
	where the operator $\mathfrak{L}$ is the following second-order linear differential operator
	\begin{equation*}
		\label{eq:L}
		\mathfrak{L}u(\X,t) := au(\X,t) + \diver\left( - \matrix{k}(\X) \nabla u (\X,t)\right).
	\end{equation*}
	
	The main target of this paper is to study the existence and uniqueness questions for the single-phase lag 
	heat equation \eqref{eq:1} subjected to given initial data and homogeneous Dirichlet boundary conditions. This SPL problem is given by
	\begin{equation}
		\label{eq:problem}
		\left\{
		\begin{array}{rll}
			\rho c \tqa \mathcal{D}_t^\alpha \pdt u(\X,t) + a\tqa \mathcal{D}_t^\alpha u(\X,t)&&\\ + \rho c \pdt u(\X,t) + \mathfrak{L} u(\X,t) &= F(\X,t), & (\X,t) \in Q_T \\
			u(\X,0) &= U_0(\X), & \X \in \Omega \\
			\pdt u (\X,0) &= V_0(\X), & \X \in \Omega \\
			u(\X,t) & = 0, & (\X,t) \in \Sigma_T.
		\end{array}
		\right.
	\end{equation}
	
	\subsection{Literature overview, new aspects and outline} \label{subsec_addedval_outline}

	First, we provide an overview of the main results when considering classical derivatives. 
	In \cite{Quintanilla2006}, the stability of the corresponding problem was investigated. Models obtained by first-order approximation for $u$ and second-order approximation for $\bm{q}$ in \eqref{eq:tzoumodel_simp} yield a hyperbolic equation which is (exponentially) stable if $\tq < 2\tT,$ whereas second-order approximation for both $u$ and $\bm{q}$ yield stability if $\tT >(2-\sqrt{3})\tq$ or $\tT \leq (2-\sqrt{3})\tq$, and the (restrictive) condition \cite[Eq. (3.10)]{Quintanilla2006} hold. For the first-order approximation considered in this paper, their result shows no conditions on the phase-lag parameters for stability. The well-posedness of these problems on smoothly bounded domains and convex domains was studied in \cite{Liu2020,Quintanilla2007} with the aid of the semigroup theory. In the case of first-order approximation for both $u$ and $\bm{q}$, the well-posedness on a bounded domain was investigated in \cite{Wang2001} for different boundary conditions and extended to higher dimensions in \cite{Wang2002}. 
	
	Fractional calculus has attracted many researchers because of the nonlocal property of the fractional derivatives and their use in the modelling part of complex systems, see, e.g. \cite{Sun2018b} and references therein. Fractional wave and diffusion equations (with constant order fractional derivatives)  were studied in e.g. \cite{Luchko2010,Sakamoto2011,Luchko2012,Kian2017,Kubica2018,Otarola2019} by means of eigenfunction expansions and in e.g. \cite{VanBockstal2020, VanBockstal2020b,VanBockstal2021,VanBockstal2022} by means of the Rothe method. For well-posedness results about equations of the form $\mathcal{D}_t^\alpha u + \pdt u(\X,t) + Lu =f$, we refer to e.g. \cite{Peszynska1996,Slodicka1997}. Similar results related to multi-term fractional equations can be found in \cite{Luchko2011,Stojanovic2011,Liu2013,Ye2013,Zheng2016}, the fractional telegraph equation \cite{Chen2008,Zhao2012,Hosseini2014,Kumar2014,Zhou2021} and the recent works about variable-order operators \cite{Wang2019d,Zheng2020b,VanBockstal2022d}. To the best of our knowledge, there is no general well-posedness result known for the SPL problem \eqref{eq:problem} under low regularity conditions. The goal of this paper is to fill in this gap. Note that the inverse source problems considered in the work \cite{Maes2020} assume the well-posedness of problem \eqref{eq:problem}.

	This paper is organised as follows. In Section~\ref{sec_prelim}, we fix the notations and assumptions, and give some valuable properties of the convolution kernel together with some technical results. The Fourier method will be used to obtain a representation formula for the solution in 1D in Section~\ref{subsec_sepvar}. Section~\ref{sec_exis} covers the existence of the solution. It includes the setup of the weak formulation and the time discretisation, as well as the a priori estimates and the Rothe functions. The uniqueness part is dealt with in Subsection~\ref{sec_unique}.
	
	
	\section{Preliminaries} \label{sec_prelim}
	\subsection{Notations and assumptions} \label{subsec_nota}
	
	We start by introducing the function spaces used throughout this work. 
	The classical $\Leb^2$-inner product is denoted by $\scal{\cdot}{\cdot}$  and the corresponding norm by $\nrm{\cdot }.$ Let $(X,\nrm{\cdot}_X)$ be a Banach space. Its dual space is denoted by $X^\ast.$  The duality pairing between ${\hko{1}}^\ast$ and $\hko{1}$ is denoted by $\inpd{\cdot}{\cdot}$ and is seen as a continuous extension of $\scal{\cdot}{\cdot}.$
	For $p\geq 1$ the space $\lpkIX{p}{X}$ is the space of all measurable functions $u \colon \Iopen \to X$ such that 
	\[
	\nrm{u}_{\lpkIX{p}{X}}^p := \int_0^T \nrm{u(t)}_X^p \dt < \infty.
	\]
	The space $\lpkIX{\infty}{X}$ is the space of all measurable functions $u\colon \Iopen \to X$ that are essentially bounded, i.e.
	\[
	\nrm{u}_{\lpkIX{\infty}{X}} := \esssup\limits_{t \in \Iclosed} \nrm{u(t)}_X < \infty.
	\]
	The space $\cIX{X}$ consists of all continuous functions $u\colon \Iclosed \to X$ such that
	\[
	\nrm{u}_{\cIX{X}} := \max_{t \in \Iclosed} \nrm{u(t)}_X < \infty.
	\]
	The space $\Hi^k\left((0,T),X\right)$ consists of all functions $u:\Iclosed \rightarrow X$ such that the weak derivative with respect to $t$ up to order $k$ exists and 
	\[
	\nrm{u}_{\Hi^k\left((0,T),X\right)}^2 := \int_0^T \left(\sum_{i=0}^k \nrm{u^{(i)}(t)}^2_X \right) \dt  < \infty.
	\]
	Similar notations will be used for the case of vector-valued functions. 
	
	Additionally, 
	the values of $C, \varepsilon$ and $C_\varepsilon$ are considered generic and positive constants. Their value can differ from place to place, but their meaning should be clear from the context. These constants are independent of the time discretisation parameter, where $\varepsilon$ is arbitrarily small and $C_\varepsilon$ arbitrarily large, i.e.\ $C_\varepsilon = C\left(1+\varepsilon + \frac{1}{\varepsilon}\right).$
	
	Now, we review some properties related to the operator $\mathfrak{L}$. The bilinear form $\mathcal{L}$ associated to the operator $\mathfrak{L}$ is given by
	\[
	\mathcal{L}\scal{u(t)}{\varphi} := \scal{\matrix{k}\nabla u(t)}{\nabla \varphi} + a\scal{u(t)}{\varphi}, \quad u(t),\varphi \in \hko{1}.
	\]
	for $u, \varphi \in \hko{1}.$ Here $a\geq 0$ and the matrix $\matrix{k}$ is a symmetric matrix-valued function on $\domain$ consisting of essentially bounded functions, i.e.
	\[
	\matrix{k} = \left(k_{i,j}(\X)\right) \in \Lp{\infty}:=\left(\Leb^{\infty}({\domain})\right)^{d \times d}, \quad \matrix{k} = \matrix{k}^\top.
	\]
	We assume $\matrix{k}$ to be uniformly elliptic, that is there exists a constant $\widetilde{k}>0$ such that for all $\X \in \domain$ and all $\bm{\xi} = \left(\xi_1,\dots,\xi_d\right)^\top  \in \RR^d$ it holds that
	\[
	\bm{\xi}^\top \cdot \matrix{k}(\X) \bm{\xi} = \sum_{i,j=1}^{d} k_{i,j}(\X) \xi_i \xi_j \geq \widetilde{k} \nrm{\bm{\xi}}^2.
	\] 
	These assumptions yield the following type of inequalities for $\mathcal{L}$:
	\begin{align}
		\mathcal{L}\scal{u}{\varphi} &\leq C\nrm{u}_{\hk{1}}\nrm{\varphi}_{\hk{1}}\nonumber \\
		\label{eq:LLell}
		\mathcal{L}\scal{\varphi}{\varphi} & \geq  \widetilde{k} \nrm{\nabla \varphi}^2,
	\end{align}
	for $u,\varphi \in \hko{1}.$  It follows that $\mathcal{L}$ is $\hko{1}$-elliptic by applying the Friedrichs' inequality on \eqref{eq:LLell}.
	
	Finally, we remember that  the fractional order appearing in \eqref{eq:problem} satisfies $0<\alpha <1$ and that the fractional phase-lag parameters $ \tq^\alpha$ is considered to be positive.  
	
	\subsection{Properties of the kernel} \label{subsec_g_prop} 
	
	We denote by $\ggamma$ the Riemann-Liouville kernel
	\[
	\ggamma(t) = \frac{t^{-\gamma}}{\Gamma(1-\gamma)}, \quad t>0, \gamma \in(0,1).
	\]
	The Caputo derivative operator $\mathcal{D}_t^\gamma$ can be rewritten as a convolution
	\[
	\mathcal{D}_t^\gamma f(t) = \pdt\left(\ggamma \ast (f-f(0))\right)(t),
	\]
	where $\ast$ denotes the Laplace convolution of two functions
	\[
	(k\ast z)(t) = \int_0^t k(t-s)z(s)\ds.
	\]
	The ensuing lemma contains several properties of the kernel $\ggamma,$ see e.g. \cite[Section~2]{VanBockstal2021}, \cite[Corollary 3.1]{VanBockstal2021} and \cite[Corollary~2]{Kubica2018}.
	\begin{lemma} \label{lemma:g}
		The function $\ggamma(t), t>0, 0<\gamma<1$ satisfies
		\begin{enumerate}[(i)]
			\item $\ggamma \in \Leb^1(0,T)$; 
			\item $\ggamma$ is decreasing in time $t$ and 
			$$
			\ggamma(t) \geq \frac{\min\{1,T^{-\gamma}\}}{\Gamma(1-\gamma)} >0, \quad t\in (0,T];
			$$
			\item $\pdt \ggamma \in \Leb^1_{\text{loc}}\left(0,T\right)$;
			\item $\ggamma(t) \geq 0, \pdt \ggamma(t) \leq 0, \pdtt \ggamma(t) \geq 0$ for all $t > 0$ and $\pdt \ggamma(t) \not\equiv 0,$ thus $\ggamma$ is strongly positive definite, i.e. for all $v \in \Leb^2_{\textit{loc}}\left( (0,\infty), \lp{2}\right)$ it holds that
			\[
			\int_0^t \scal{\left(\ggamma\ast v\right)(s)}{v(s)}\ds \geq 0, \quad t \geq 0;
			\]
			\item     For any $v \colon [0,T] \to \lp{2}$ satisfying $v \in \lpIlp$ with $\ggamma \ast v \in \Hi^1\left((0,T),\lp{2}\right),$ it holds for all $t \in [0,T]$ that
			\begin{equation}
				\label{eq:zacher_corol}
				\int_0^t \scal{\pdt \left(\ggamma\ast v \right)(s)}{v(s)}\ds \geq \frac{g_\gamma(T)}{2} \int_0^t \nrm{v(s)}^2_{\lp{2}}\ds.
			\end{equation}
		\end{enumerate}
	\end{lemma}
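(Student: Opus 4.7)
The plan is to verify items (i)–(iv) by direct computation on the closed form $\ggamma(t)=t^{-\gamma}/\Gamma(1-\gamma)$, and to treat (v) by adapting the Zacher–Kubica–Yamamoto fractional coercivity argument. For (i), compute $\int_0^T t^{-\gamma}\dt = T^{1-\gamma}/(1-\gamma) < \infty$ since $\gamma\in(0,1)$, so $\ggamma \in \Leb^1(0,T)$. For (ii), note $\pdt \ggamma(t) = -\gamma\, t^{-\gamma-1}/\Gamma(1-\gamma) < 0$, so $\ggamma$ is strictly decreasing; the lower bound then follows by distinguishing $t\in(0,1]$ (on which $t^{-\gamma}\geq 1$) from $t\in[1,T]$ (on which $t^{-\gamma}\geq T^{-\gamma}$), yielding $\ggamma(t)\geq \min\{1,T^{-\gamma}\}/\Gamma(1-\gamma)$.

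For (iii), $\pdt \ggamma$ is continuous and pointwise finite on every $[\delta,T]\subset(0,T]$, with $\int_\delta^T |\pdt \ggamma(t)|\dt = (\delta^{-\gamma}-T^{-\gamma})/\Gamma(1-\gamma)$, so $\pdt \ggamma \in \Leb^1_{\text{loc}}(0,T)$. The pointwise sign conditions in (iv) follow from direct differentiation, in particular $\pdtt \ggamma(t) = \gamma(\gamma+1)t^{-\gamma-2}/\Gamma(1-\gamma)>0$, together with $\pdt \ggamma \not\equiv 0$. The positive-definiteness assertion in (iv) is then obtained from the classical fact that any nonnegative, nonincreasing, convex, integrable kernel on $(0,\infty)$ is positive definite (equivalently, by Bernstein's theorem, $\ggamma$ is completely monotone, hence the Laplace transform of a positive measure, so its convolution quadratic form is nonnegative by a Plancherel-type computation). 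This is the route used e.g.\ by Nohel–Shea, and it can be invoked as a black box here.

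The main obstacle is item (v), namely
\[
\int_0^t \scal{\pdt(\ggamma\ast v)(s)}{v(s)}\ds \;\geq\; \frac{\ggamma(T)}{2}\int_0^t \nrm{v(s)}^2_{\lp{2}}\ds.
\]
My strategy is to use the hypothesis $\ggamma\ast v \in \Hi^1((0,T),\lp{2})$ to justify a Leibniz-type splitting
\[
\pdt(\ggamma\ast v)(s) \;=\; \ggamma(T) v(s) + \bigl((\ggamma - \ggamma(T)) \ast \pdt v\bigr)(s) + \text{(initial term)},
\]
interpreted in the $\lp{2}$-valued sense, where the first summand produces the lower bound on testing with $v(s)$ and integrating. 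The remainder involves the kernel $\ggamma - \ggamma(T)$, which is still nonnegative, nonincreasing, and convex on $[0,T]$; its convolution quadratic form is therefore nonnegative by the positive-definiteness established in (iv), so it can be discarded. The delicate point is to legitimise these manipulations under the minimal regularity of $v$ and $\ggamma\ast v$ assumed; this I would handle by a density/approximation argument, mollifying $v$ in time, applying the identity in the smooth setting, and passing to the limit using the uniform bounds provided by (i)–(iv). The precise execution mirrors the argument of Kubica–Yamamoto that is cited in the statement.
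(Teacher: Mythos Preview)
The paper does not supply its own proof of this lemma; it simply refers to \cite[Section~2]{VanBockstal2021}, \cite[Corollary~3.1]{VanBockstal2021} and \cite[Corollary~2]{Kubica2018}. Your verification of (i)--(iv) is correct and is exactly the kind of direct computation those references contain.

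For (v), however, your proposed argument has a genuine gap. After splitting $\ggamma = \ggamma(T) + h$ with $h(t) := \ggamma(t) - \ggamma(T)$, you correctly isolate the contribution $\ggamma(T)\int_0^t\nrm{v(s)}^2\ds$. The problem is the remainder. Writing it as $\bigl((\,\ggamma-\ggamma(T)\,)\ast \pdt v\bigr)(s)$ already requires $\pdt v$, which is not assumed (only $v\in\lpIlp$ and $\ggamma\ast v\in\Hi^1$ are given). Mollification can repair that, but even for smooth $v$ the remainder you must control is
\[
\int_0^t \scal{\pdt(h\ast v)(s)}{v(s)}\ds,
\]
and this is \emph{not} what item (iv) provides. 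Positive definiteness in (iv) asserts $\int_0^t \scal{(h\ast v)(s)}{v(s)}\ds \geq 0$, i.e.\ nonnegativity of the convolution quadratic form itself, whereas here you need nonnegativity of the form carrying an extra time derivative on the convolution. The latter is precisely the content of (v) for the kernel $h$, so appealing to (iv) at this point is circular.

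The argument in the cited references (Zacher; Kubica--Yamamoto) does not proceed via your splitting. Its core is the pointwise inequality
\[
\scal{\pdt(\ggamma\ast v)(s)}{v(s)} \;\geq\; \tfrac{1}{2}\,\pdt\!\left(\ggamma\ast \nrm{v}^2\right)(s) + \tfrac{1}{2}\,\ggamma(s)\,\nrm{v(s)}^2,
\]
valid for any nonnegative nonincreasing $\Leb^1$-kernel. Integrating in $s$ and using $\ggamma(s)\geq \ggamma(T)$ on $(0,T]$ then yields \eqref{eq:zacher_corol}, and this is also where the factor $\tfrac{1}{2}$ originates. If you wish to give a proof rather than a citation, this pointwise identity (proved by expanding the convolution and completing the square) is what you should establish, not the Leibniz-type decomposition you sketched.
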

	
	Fractional integration 
	\[
	I^\alpha f(t) = \frac{1}{\Gamma(\alpha)}\int_0^t (t-s)^{\alpha-1} f(s)\ds = \left(g_{1-\alpha} \ast f\right)(t), \quad \text{for } \alpha > 0, f\in \Leb^1\Iopen, 
	\]
	has the property \cite[Eq. (2.21)]{Samko1993}
	\begin{equation}\label{eq:prop_frac_int}
		\left(I^{\alpha_1} (I^{\alpha_2} f)\right)(t) = \left(I^{\alpha_1+\alpha_2}f\right)(t), \quad \forall t\in\I,
	\end{equation}
	if $\alpha_1> 0, \alpha_2> 0$ and  $f \in \Cont([0,T]).$
	This property holds true for a.e. $t\in \I$ if $f\in \Leb^1\Iopen$ and it leads to the following consequence. 
	
	\begin{lemma}
		\label{lem:pdtconv}
		Let $u$ be absolutely continuous on $\I$ and assume that $\gamma\in(0,1)$. Then for a.a. $t\in\I$ it holds that
		\[
		\pdt (\ggamma \ast u) (t) = \ggamma(t) u(0) + (\ggamma \ast \pdt u)(t).
		\]
	\end{lemma}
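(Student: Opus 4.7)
The plan is to use the absolute continuity of $u$ to split $u$ into its initial value plus the primitive of $\pdt u$, and then exploit the associativity and commutativity of the Laplace convolution to reduce the identity to two elementary differentiations.

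Since $u$ is absolutely continuous on $\I$, the derivative $\pdt u$ belongs to $\Leb^1\Iopen$ and one has the representation $u(t) = u(0) + \int_0^t \pdt u(s)\ds = u(0) + (\bm{1}\ast \pdt u)(t)$, where $\bm{1}$ denotes the constant function equal to one. Plugging this into $\ggamma\ast u$ and using the linearity/bilinearity of the convolution yields
\[
(\ggamma\ast u)(t) = u(0)\,(\ggamma\ast \bm{1})(t) + \bigl(\ggamma\ast(\bm{1}\ast \pdt u)\bigr)(t).
\]
By Lemma~\ref{lemma:g}(i), $\ggamma\in\Leb^1(0,T)$, so Young's convolution inequality gives $\ggamma\ast \pdt u\in\Leb^1(0,T)$; the associativity and commutativity of the Laplace convolution (justified by Fubini on $L^1$) then imply $\ggamma\ast(\bm{1}\ast \pdt u)=\bm{1}\ast(\ggamma\ast \pdt u)$.

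The first summand is handled by a direct computation: $(\ggamma\ast \bm{1})(t)=\int_0^t \ggamma(s)\ds$, which is absolutely continuous on $\I$ with classical derivative $\ggamma(t)$ for $t>0$, so
\[
\pdt\bigl[u(0)\,(\ggamma\ast\bm{1})\bigr](t) = \ggamma(t)\,u(0), \qquad t\in(0,T].
\]
For the second summand, since $\ggamma\ast \pdt u\in\Leb^1(0,T)$, the function $t\mapsto \bigl(\bm{1}\ast(\ggamma\ast\pdt u)\bigr)(t)=\int_0^t(\ggamma\ast\pdt u)(s)\ds$ is absolutely continuous, and the Lebesgue differentiation theorem yields
\[
\pdt\bigl[\bm{1}\ast(\ggamma\ast\pdt u)\bigr](t) = (\ggamma\ast \pdt u)(t) \qquad \text{for a.a. } t\in\I.
\]
Adding the two contributions gives the claimed identity. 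The only subtlety worth flagging is the associativity step: because $\ggamma$ is singular at zero, one cannot appeal to continuity of the integrand, but the $L^1$-framework and Fubini's theorem handle it without difficulty.
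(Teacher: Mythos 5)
Your proof is correct and follows essentially the same route as the paper: both start from the absolute-continuity decomposition $u(t)=u(0)+\int_0^t \pdt u(s)\ds$ and reduce the claim to differentiating $\int_0^t \ggamma$ and a primitive of $\ggamma\ast\pdt u$. Your use of associativity/commutativity of the convolution via Fubini is just a cleaner packaging of the paper's two steps (integration by parts to reach $\ggammamin\ast\pdt u$, then the semigroup property of fractional integration), and your attention to the $L^1$ justification of the associativity is exactly the right point to flag.
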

	\begin{proof}
		We follow the lines of \cite[Lemma~2.1]{Kubica2017}.
		Since $u$ is absolutely continuous (then $\pdt u \in \Leb^1 \Iopen$ exists and $u(t)=u(a)+\int_a^t \pdt u(s)\ds$), we may write that 
		\begin{align*}
			\left(\ggamma\ast u\right)(t) &= \left(\ggamma\ast u(0)\right)(t) + \left(g_{\beta}\ast \int_0^{\bullet} \pdt u (s)\ds\right)(t)\\
			&= \left(\int_0^t \ggamma(s)\ds\right)u(0) + \int_0^t \ggamma(s)\int_0^{t-s}\pdt u(\tau)\dtau\ds,
		\end{align*}
		where the notation $\bullet$ is a placeholder for the independent variable.
		As $\partial_s \int_0^{t-s}\pdt u(\tau)\dtau = - \partial_{t-s} \int_0^{t-s}\pdt u(\tau)\dtau =- \pdt u(t-s)$, we get via partial integration that 
		\[
		\left(\ggamma\ast u\right)(t) = \left(\int_0^t \ggamma(s)\ds\right)u(0)   + \int_0^t \left(\int_0^s \ggamma(\tau) \dtau\right) \pdt u(t-s) \ds,
		\]
		since the boundary term vanishes. 
		Note that 
		\[
		\int_0^s \ggamma(\tau) \dtau = \int_0^s \frac{\tau^{-\gamma}}{\Gamma(1-\gamma)} \dtau = \frac{s^{1-\gamma}}{\Gamma(2-\gamma)} = \ggammamin(s). 
		\]
		Therefore, 
		\begin{equation}\label{thm1:eq1}
			\left(\ggamma\ast u\right)(t) = 
			\left(\int_0^t \ggamma(s)\ds\right)u(0)   + (\ggammamin\ast \pdt u)(t). 
		\end{equation}
		From \eqref{eq:prop_frac_int}, it follows that 
		\[
		(\ggammamin\ast \pdt u)(t)  = \left(I^{2-\gamma} \pdt u\right)(t)  = \left(I (I^{1-\gamma} \pdt u)\right)(t) = \left( I (\ggamma\ast \pdt u)\right)(t).
		\]
		Hence,
		\begin{align*} 
			\pdt  (\ggammamin\ast \pdt u)(t) &= \pdt I (\ggamma\ast \pdt u)(t) = (\ggamma\ast \pdt u)(t).
		\end{align*}
		Therefore, the result follows from differentiating \eqref{thm1:eq1} with respect to $t$. 
	\end{proof}
	
	\section{Solution via Fourier method in 1D} \label{subsec_sepvar} 
	In this part, we consider \eqref{eq:problem} with $k(x)=\bar k>0$ on the domain $\Omega =(0,L), L>0,$  and we suppose that the solution $u$ can be written as $u(x,t) =X(x)T(t).$ Employing the separation of variables technique leads to the eigenvalue problem
	\begin{equation*}
		\label{eq:problem_sep_var_space}
		\left\{
		\begin{array}{rll}
		-\bar k X''(x) + a X(x) &= \sigma  X(x), & \quad x \in(0,L)\\
			X(0) = X(L) &= 0,&
		\end{array}
		\right.
	\end{equation*}
	where $\sigma$ is the separation constant and $a>0$. The normalised solutions to this problem are given by
	\begin{equation*} \label{eq:Xn}
		X_n(x) = \sqrt{\frac{2}{L}} \sin\left(\sqrt{\frac{\sigma_n -a}{\bar k} } x\right) = \sqrt{\frac{2}{L}} \sin\left(\frac{n\pi}{L}x\right),
	\end{equation*} 
	where $\sigma_n = a +  \bar k\left(\frac{n\pi}{L}\right)^2$ for $n \in \mathbb{N}.$ 
	The functions $T_n(t)$ satisfy a fractional differential equation of the form
	\[
	\rho c \tau_q^\alpha \mathcal{D}_t^\alpha \pdt T + \rho c \pdt T + a\tau_q^\alpha \mathcal{D}_t^\alpha T + \sigma_n T = 0.
	\] 
	By means of the Laplace transform, we find that
	\[
	\mathcal{L}\left[ T_n(t)\right](s) = T_n(0) \left(\rho c \tau_q^\alpha s^\alpha + a \tau_q^\alpha s^{\alpha-1} + \rho c\right) g(s) + T'_n(0)\rho c \tau_q^\alpha s^{\alpha - 1} g(s),
	\] 
	where by \cite[Eqs. (5.31)-(5.33)]{Podlubny1998} the function $g(s)$ is given by
	\[
	g(s) = \frac{1}{\rho c \tau_q^\alpha} \sum_{m=0}^{\infty} (-1)^m \left(\frac{\sigma_n}{\rho c \tau^\alpha_q}\right)^m \sum_{k=0}^m \binom{m}{k} \left(\frac{a \tau_q^\alpha}{\sigma_n}\right)^k  \frac{s^{\alpha k - m - 1}}{\left(s^\alpha + \frac{1}{\tau_q^\alpha}\right)^{m+1}},
	\] 
	and forms a Laplace pair with
	\begin{multline}
		\label{eq:G(t)}
		G(t) = \frac{1}{\rho c \tau_q^\alpha } \sum_{m=0}^\infty \frac{1}{m!} \left(\frac{-\sigma_n}{\rho c \tau_q^\alpha}\right)^m \sum_{k = 0}^m \binom{m}{k} \left(\frac{a \tau_q^\alpha}{\sigma_n}\right)^k \\ \quad \times  t^{(\alpha +1)(m+1) - \alpha k - 1} E^{(m)}_{\alpha, \alpha + 1 + m - \alpha k} \left(-\frac{1}{\tau_q^\alpha} t^\alpha \right).
	\end{multline}
	Here $E_{\gamma, \delta}(z) = \sum_{k=0}^\infty \frac{z^k}{\Gamma(\beta + \alpha k)}$ is the two parameter Mittag-Leffler function, see \cite[Eq. (1.56)]{Podlubny1998}. 
	Using properties of the Laplace transform related to fractional derivatives \cite[§2.8]{Podlubny1998}, we find that
	\begin{equation}\label{eq:laplace_prop}
		\mathcal{L}\left[\mathcal{D}_t^\alpha G(t) \right](s) = s^\alpha g(s) \quand \mathcal{L}\left[ \int_0^t \mathcal{D}_t^\alpha G(z)\mathrm{d}z \right](s) = s^{\alpha - 1} g(s),
	\end{equation}
	noting that $G(0) = 0.$ 
	
	The multinomial Mittag-Leffler function, see \cite[Eq. (3.16)]{Luchko1996}, is given by
	\begin{equation} \label{eq:MML}
		E_{(\alpha_1, \dots, \alpha_m),\beta}(z_1, \dots, z_m) = \sum_{k=0}^\infty \sum_{\substack{ k_1+\dots + k_m = k\\ k_j\geq 0}} \binom{k}{k_1,\dots, k_m} \frac{\prod_{j=1}^m z_j^{k_j}}{ \Gamma(\beta + \sum_{j=1}^m \alpha_j k_j)},
	\end{equation} 
	where $\binom{k}{k_1,\dots,k_m}$ is the multinomial coefficient. Let us remark that \eqref{eq:MML} is invariant under (the same) permutation on the parameters $\alpha_j$ and the variables $z_j$ with $j=1,\dots,m.$ 
	
	Following the approach from \cite{LiLi2020}, we can rewrite \eqref{eq:G(t)} by means of multinomial Mittag-Leffler functions. After relabelling, we obtain
\begin{equation}
    	\label{eq:Gt}
		G(t) = \frac{t^\alpha}{\rho c \tau_q^\alpha} E_{(\alpha+1,1,\alpha), \alpha +1}\left(\frac{-\sigma_n}{\rho c \tau_q^\alpha} t^{\alpha+1}, \frac{-a}{\rho c} t,\frac{-1}{\tau_q^\alpha}t^\alpha\right).
\end{equation}

	We now list some useful lemmas, the first one is a slight generalisation of \cite[Lemma~3.3]{LiLiuYamamoto2015}, which we recover by setting $\gamma = \beta.$
	\begin{lemma}
		\label{lem:diffMML}
		Let $\alpha_1,\dots, \alpha_m$ be positive constants and $q_1, \dots, q_m, \beta$ real. Then, for real $\gamma$, we have that
		\begin{align*} 
			&\frac{\mathrm{d}}{\mathrm{d}t} \left[ t^\gamma E_{(\alpha_1, \dots, \alpha_m), \beta +1}\left(q_1 t^{\alpha_1}, \dots, q_m t^{\alpha_m}\right)\right]\\   &\quad = t^{\gamma-1} E_{(\alpha_1, \dots, \alpha_m),\beta}\left(q_1 t^{\alpha_1}, \dots, q_m t^{\alpha_m}\right) \\   & \qquad+(\gamma-\beta) t^{\gamma-1} E_{(\alpha_1,\dots,\alpha_m), \beta +1}\left(q_1 t^{\alpha_1}, \dots, q_m t^{\alpha_m}\right). 
		\end{align*}
	\end{lemma}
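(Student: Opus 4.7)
The plan is to work directly from the series definition \eqref{eq:MML} and differentiate termwise, imitating the argument of \cite[Lemma~3.3]{LiLiuYamamoto2015} but keeping $\gamma$ and $\beta$ separate. Concretely, first I would substitute the series into $t^\gamma E_{(\alpha_1,\dots,\alpha_m),\beta+1}(q_1 t^{\alpha_1},\dots,q_m t^{\alpha_m})$ to obtain
\begin{equation*}
t^\gamma E_{(\alpha_1,\dots,\alpha_m),\beta+1}\!\left(q_1 t^{\alpha_1},\dots,q_m t^{\alpha_m}\right)
= \sum_{k=0}^{\infty}\sum_{\substack{k_1+\dots+k_m=k\\k_j\ge 0}} \binom{k}{k_1,\dots,k_m}\frac{\prod_{j=1}^m q_j^{k_j}\, t^{\gamma+\sum_{j=1}^m \alpha_j k_j}}{\Gamma\!\left(\beta+1+\sum_{j=1}^m \alpha_j k_j\right)}.
\end{equation*}

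Next I would justify termwise differentiation on any compact subinterval of $(0,\infty)$. Since $\alpha_j>0$, the asymptotic growth of $1/\Gamma(\beta+1+\sum \alpha_j k_j)$ forces the coefficients in the multi-index $(k_1,\dots,k_m)$ to decay faster than any polynomial in $k$, so the formally differentiated series converges uniformly on every interval $[\delta,M]\subset(0,\infty)$; this is the same estimate used to establish entirety of the multinomial Mittag-Leffler function in \cite{Luchko1996}. Applying $d/dt$ termwise then yields
\begin{equation*}
\frac{d}{dt}\Bigl[t^\gamma E_{(\alpha_1,\dots,\alpha_m),\beta+1}(\cdots)\Bigr]
= \sum_{k=0}^{\infty}\sum_{\substack{k_1+\dots+k_m=k\\k_j\ge 0}} \binom{k}{k_1,\dots,k_m}\frac{\prod q_j^{k_j}\,\bigl(\gamma+\sum \alpha_j k_j\bigr)\, t^{\gamma-1+\sum \alpha_j k_j}}{\Gamma\!\left(\beta+1+\sum \alpha_j k_j\right)}.
\end{equation*}

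The final step is the algebraic identity $\gamma+\sum\alpha_j k_j = (\gamma-\beta)+\bigl(\beta+\sum\alpha_j k_j\bigr)$ together with the functional equation $\Gamma(\beta+1+s)=(\beta+s)\Gamma(\beta+s)$ applied with $s=\sum\alpha_j k_j$. The first piece of the split reproduces $(\gamma-\beta)\,t^{\gamma-1}$ times the original series, giving the second term on the right-hand side of the claim. The second piece cancels the factor $(\beta+\sum\alpha_j k_j)$ in the numerator with the one manufactured in the denominator, lowering the second subscript from $\beta+1$ to $\beta$, and produces $t^{\gamma-1} E_{(\alpha_1,\dots,\alpha_m),\beta}(\cdots)$. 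Summing the two pieces gives the stated formula.

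The only nontrivial point is the uniform-convergence justification for termwise differentiation of a genuinely multi-indexed series; this is standard but I would include a brief estimate (e.g. bounding $\prod|q_j t^{\alpha_j}|^{k_j}\le (\max_j |q_j| M^{\alpha_j})^k$ on $[0,M]$ and using Stirling on the Gamma function in the denominator) rather than hand-waving it, since the lemma is invoked later in the paper to obtain bounds on $G(t)$ from \eqref{eq:Gt}.
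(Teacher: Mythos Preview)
Your argument is correct. The route differs slightly from the paper's: the paper first establishes the special case $\gamma=\beta$ by termwise differentiation (this is exactly the result of \cite[Lemma~3.3]{LiLiuYamamoto2015}), and then writes $t^\gamma E_{(\alpha_1,\dots,\alpha_m),\beta+1}(\cdots)=t^{\gamma-\beta}\cdot t^\beta E_{(\alpha_1,\dots,\alpha_m),\beta+1}(\cdots)$ and applies the product rule. You instead differentiate the full series directly and perform the algebraic split $\gamma+\sum_j\alpha_jk_j=(\gamma-\beta)+(\beta+\sum_j\alpha_jk_j)$ together with $\Gamma(\beta+1+s)=(\beta+s)\Gamma(\beta+s)$. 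The two decompositions are dual to one another and rest on the same ingredients (termwise differentiation plus the Gamma recursion); your version is marginally more self-contained, while the paper's version makes the reduction to the known case explicit. Your added remark on uniform convergence is a welcome justification that the paper leaves implicit.
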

	\begin{proof}
		First, we calculate
		\begin{align*}
			& \frac{\mathrm{d}}{\mathrm{d}t} \left[t^\beta E_{(\alpha_1, \dots, \alpha_m),\beta +1}\left(q_1 t^{\alpha_1}, \dots, q_m t^{\alpha_m}\right)\right] \\
			&\quad =  \frac{\mathrm{d}}{\mathrm{d}t} \left[ \sum_{k=0}^\infty \sum_{k_1+\dots + k_m=k} \binom{k}{k_1,\dots, k_m} \prod_{j=1}^m q_j^{k_j} \frac{t^{\beta + \sum_{j=1}^m \alpha_j k_j}}{\Gamma\left(\beta + 1+ \sum_{j=1}^m \alpha_j k_j\right)} \right] \\
			&\quad = \sum_{k=0}^\infty \sum_{k_1+ \dots + k_m = k} \binom{k}{k_1,\dots, k_m} \prod_{j=1}^m q_j^{k_j} \frac{t^{\beta - 1 + \sum_{j=1}^m \alpha_j k_j}}{\Gamma\left(\beta + \sum_{j=1}^m \alpha_j k_j\right)} \\
			&\quad =  t^{\beta-1} E_{(\alpha_1,\dots, \alpha_m), \beta}\left(q_1 t^{\alpha_1}, \dots, q_m t^{\alpha_m}\right).
		\end{align*}
		Then, an application of the chain rule  shows that
		\begin{align*}
			& \frac{\mathrm{d}}{\mathrm{d}t} \left[t^\gamma E_{(\alpha_1,\dots, \alpha_m), \beta +1}\left(q_1 t^{\alpha_1}, \dots, q_m t^{\alpha_m}\right) \right] \\
			&\quad =  \frac{\mathrm{d}}{\mathrm{d}t} \left[ t^{\gamma - \beta} t^{\beta} E_{(\alpha_1,\dots, \alpha_m),\beta+1}\left(q_1 t^{\alpha_1}, \dots, q_m t^{\alpha_m}\right) \right] \\
			&\quad = t^{\gamma - \beta} t^{\beta -1} E_{(\alpha_1, \dots, \alpha_m), \beta}\left(q_1 t^{\alpha_1}, \dots, q_m t^{\alpha_m}\right)\\
			&\qquad + (\gamma - \beta) t^{\gamma-\beta -1} t^{\beta} E_{(\alpha_1, \dots, \alpha_m), \beta+1}\left(q_1 t^{\alpha_1}, \dots, q_m t^{\alpha_m}\right),
		\end{align*}
		which proves the result.
	\end{proof}
	
	The next result is a generalization of the property $E_{\alpha, 0}(z) = z E_{\alpha, \alpha}(z)$ for the two-parameter Mittag-Leffler function. 
	\begin{lemma} \label{lem:MMLsum}
		Let $\alpha_1, \dots, \alpha_m$ be positive constants, $z_1,\dots,z_m \in \mathbb{C}$ and $\beta>0$ be fixed. Then
		\[
		\sum_{j=1}^m z_j E_{(\alpha_1, \dots, \alpha_m), \beta + \alpha_j}\left(z_1, \dots, z_m\right) + \frac{1}{\Gamma(\beta)} = E_{(\alpha_1, \dots, \alpha_m),\beta}\left(z_1, \dots, z_m\right).
		\]
	\end{lemma}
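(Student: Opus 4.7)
The plan is to prove the identity term by term in the defining power series \eqref{eq:MML}, using the combinatorial identity that $\sum_{j=1}^{m} k_j / k = 1$ whenever $k=\sum_{\ell=1}^m k_\ell \geq 1$. Since the multinomial Mittag-Leffler function is entire in $(z_1,\dots,z_m)$, all the rearrangements I will perform are justified by absolute convergence.

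First, I would isolate the $k=0$ contribution of the right-hand side: the only multi-index with $\sum_\ell k_\ell = 0$ is $(0,\dots,0)$, which contributes precisely $1/\Gamma(\beta)$. Thus the claim reduces to showing
\[
\sum_{j=1}^{m} z_j\, E_{(\alpha_1,\dots,\alpha_m),\,\beta+\alpha_j}(z_1,\dots,z_m)
=\sum_{k=1}^{\infty}\sum_{\substack{k_1+\cdots+k_m=k\\ k_\ell\geq 0}}\binom{k}{k_1,\dots,k_m}\frac{\prod_{\ell=1}^{m}z_\ell^{k_\ell}}{\Gamma\!\bigl(\beta+\sum_{\ell=1}^{m}\alpha_\ell k_\ell\bigr)}.
\]

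Next, I would expand the left-hand side using \eqref{eq:MML} with parameter $\beta+\alpha_j$ and perform the index shift $k_j\mapsto k_j-1$ (keeping the other indices fixed) in the $j$-th series; the factor $z_j$ is absorbed into $z_j^{k_j}$, and the shift turns $\Gamma(\beta+\alpha_j+\sum_\ell \alpha_\ell k_\ell)$ into $\Gamma(\beta+\sum_\ell \alpha_\ell \widetilde k_\ell)$ for the new multi-index. The relation
\[
\binom{k-1}{k_1,\dots,k_j-1,\dots,k_m} \;=\; \frac{k_j}{k}\binom{k}{k_1,\dots,k_m}
\]
then rewrites the coefficient in terms of the same multinomial coefficient appearing on the right-hand side. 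After swapping the (absolutely convergent) sums, the inner sum over $j$ collapses via $\sum_{j=1}^{m} k_j/k = 1$, giving exactly the right-hand side displayed above. This completes the proof.

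I do not expect any real obstacle here: the result is a purely formal identity on power series coefficients, and the only point requiring a sentence of care is the legitimacy of the index shift and the reordering of the double series, which is immediate because each multinomial Mittag-Leffler series is absolutely convergent on all of $\mathbb{C}^m$. The key combinatorial lemma $\binom{k-1}{\dots,k_j-1,\dots}=(k_j/k)\binom{k}{\dots,k_j,\dots}$ and the collapse $\sum_j k_j/k = 1$ are the heart of the argument, and they mirror exactly the classical one-variable proof of $E_{\alpha,0}(z)=z E_{\alpha,\alpha}(z)$, which is recovered by setting $m=1$ and $\beta=\alpha$.
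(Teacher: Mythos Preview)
Your proof is correct: the index shift together with the multinomial identity $\binom{k-1}{\dots,k_j-1,\dots}=(k_j/k)\binom{k}{\dots}$ and the collapse $\sum_j k_j/k=1$ is exactly the intended computation. The paper does not spell out a proof but refers to \cite[Lemma~3.1]{LiLiuYamamoto2015}, where the same series manipulation is carried out, so your approach coincides with the one the paper invokes.
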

	The proof can be performed as in \cite[Lemma~3.1]{LiLiuYamamoto2015}. Let us remark that this result remains valid in the limit case $\beta\to 0,$ as $1/\Gamma(\beta) \to 0.$ Next, we extend \cite[Lemma~3.2]{LiLiuYamamoto2015} such that this lemmata can be used in our problem setting (multinomial Mittag-Leffler function \eqref{eq:MML} with $\alpha_1 \in [1,2)$) as well.

	\begin{lemma}\label{lem:MMLbound}
		Let $\beta >0$ and $\alpha_1>\alpha_2 > \dots > \alpha_m>0$ be given with $\alpha_1 \in (0,2)$. Assume that $\alpha_1 \pi /2 < \mu < \min\{ \alpha_1\pi, \pi\}, \mu \leq \abs{\argument(z_1)} \leq \pi$ and there exists $K>0$ such that $-K \leq z_j<0$ for $j=2,\dots,m.$ Then there exists a constant $C>0$ depending only on $\mu, K, \alpha_j, j=1,\dots,m$ and $\beta$ such that
		\[
		\abs{E_{(\alpha_1, \dots,\alpha_m), \beta}\left(z_1,\dots,z_m\right)} \leq \frac{C}{1+ \abs{z_1}}.
		\]
	\end{lemma}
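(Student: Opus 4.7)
The plan is to adapt the proof of \cite[Lemma~3.2]{LiLiuYamamoto2015}, which treats the case $\alpha_1\in(0,1)$, to the wider range $\alpha_1\in(0,2)$. The key ingredient beyond that case is the classical two-parameter Mittag-Leffler bound
\[
\abs{E_{\alpha_1,\beta'}(\zeta)}\leq \frac{C(\alpha_1,\beta',\mu)}{1+\abs{\zeta}},\qquad \mu\leq \abs{\argument(\zeta)}\leq \pi,
\]
which is known to hold throughout $\alpha_1\in(0,2)$ whenever $\alpha_1\pi/2<\mu<\min\{\alpha_1\pi,\pi\}$; see \cite[Theorem~1.6]{Podlubny1998}. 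Crucially, the hypothesis $\mu<\alpha_1\pi$ yields $\mu/\alpha_1\in(\pi/2,\pi)$, so a Hankel contour of opening angle strictly larger than $\pi/2$ is still available.

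First I would derive the integral representation
\[
E_{(\alpha_1,\ldots,\alpha_m),\beta}(z_1,\ldots,z_m) = \frac{1}{2\pi i}\int_{\Gamma_\theta}\frac{e^s\, s^{-\beta}}{1-\sum_{j=1}^m z_j s^{-\alpha_j}}\,ds,
\]
obtained by Laplace-transforming the series \eqref{eq:MML} term-wise after the substitution $z_j\to z_j t^{\alpha_j}$, summing the resulting geometric series, and inverting along a Bromwich path which is then deformed onto a Hankel contour $\Gamma_\theta$ made of two rays $\{re^{\pm i\theta}:r\geq\varrho\}$ joined by the arc $\{\varrho e^{i\varphi}:\abs{\varphi}\leq\theta\}$, with $\theta\in(\pi/2,\mu/\alpha_1)$ and $\varrho$ large enough to enclose every zero of the denominator. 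The core analytic step is then a uniform lower bound on $D(s):=1-\sum_{j=1}^m z_j s^{-\alpha_j}$ along $\Gamma_\theta$. The choice $\alpha_1\theta<\mu$ together with the sectorial assumption on $z_1$ forces $\abs{\argument(z_1 s^{-\alpha_1})}\geq \mu-\alpha_1\theta>0$, so that $\abs{1-z_1 s^{-\alpha_1}}\geq c(1+\abs{z_1 s^{-\alpha_1}})$ with $c=c(\mu,\alpha_1)>0$. The remaining contributions satisfy $\abs{z_j s^{-\alpha_j}}\leq K\abs{s}^{-\alpha_j}$ and are absorbed as a perturbation once $\varrho$ is large enough (depending on $K$ and the $\alpha_j$). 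After the standard rescaling $s=\abs{z_1}^{1/\alpha_1}w$, with the easy case $\abs{z_1}\leq 1$ handled directly from the absolutely convergent series \eqref{eq:MML}, the resulting contour estimate produces the factor $(1+\abs{z_1})^{-1}$.

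The main obstacle is ruling out cancellation in $D(s)$ in the intermediate regime where $\abs{z_1 s^{-\alpha_1}}$ and the perturbations $\abs{z_j s^{-\alpha_j}}$, $j\geq 2$, are of comparable size. Here the sign condition $z_j<0$ plays a decisive role: along the rays, each $-z_j s^{-\alpha_j}$ is a positive multiple of $e^{\mp i\alpha_j\theta}$, so its argument has modulus $\alpha_j\theta<\alpha_1\theta<\mu$, a sector disjoint from the one $\mu-\alpha_1\theta\leq\abs{\argument(z_1 s^{-\alpha_1})}\leq\pi-\alpha_1\theta$ occupied by the leading perturbation. The regime $\alpha_1\in(1,2)$, which is new compared with \cite{LiLiuYamamoto2015}, is the most delicate, because $\alpha_1\theta$ may exceed $\pi/2$ and widens the sector carrying $z_1 s^{-\alpha_1}$; the hypothesis $\mu<\alpha_1\pi$ is precisely what keeps that sector bounded away from the positive real axis, so that the same argument-separation estimate carries through and delivers the claimed bound.
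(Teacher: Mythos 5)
Your proposal is correct and follows essentially the same route as the paper: your Hankel-contour representation is the paper's $\zeta$-integral under the substitution $\zeta=s^{\alpha_1}$ (derived there from Podlubny's contour formula for $1/\Gamma$ rather than by Laplace inversion), with the same geometric-series resummation of the denominator and the same case split between bounded $\abs{z_1}$ (estimated from the series via a Gamma-function comparison) and large $\abs{z_1}$ (estimated on the contour using the sector condition $\mu\leq\abs{\argument(z_1)}\leq\pi$ together with $\alpha_1\pi/2<\mu<\min\{\alpha_1\pi,\pi\}$). Your write-up is in fact more explicit than the paper's about the lower bound on the denominator away from the perturbative regime, where the paper simply defers to repeating the arguments of Li--Liu--Yamamoto.
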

	\begin{proof}
		The proof goes along the same lines as that of \cite[Lemma 3.2]{LiLiuYamamoto2015}, therefore we point out some key steps and comment on the differences. The starting point is the contour representation of the reciprocal of the gamma function, \cite[Eq. (1.52)]{Podlubny1998}
		\begin{equation} \label{eq:gamma_recipr}
			\frac{1}{\Gamma(z)} = \frac{1}{2\pi \alpha_1 i} \int_{\gamma(R,\theta)} \exp\left(\zeta^{1/\alpha_1}\right) \zeta^{(1-z-\alpha_1)/\alpha_1} \mathrm{d}\zeta,
		\end{equation} 
		which is applicable since $\alpha_1 < 2$ and $\pi\alpha_1 /2 < \mu < \min\{ \alpha_1\pi, \pi\}.$ Here $\gamma(R,\theta)$ is the contour consisting of an circular arc $\{\zeta \in \CC  : \abs{\zeta} =R, \abs{\argument(\zeta)} \leq \theta\} $ and two half lines $\{ \zeta \in \CC: \abs{\zeta}>R, \abs{\argument(\zeta)} =  \pm \theta \}. $ The radius $R$ is to be fixed later and $\theta$ is chosen such that $ \alpha_1 \pi / 2 < \theta < \mu.$ 
		By means of \eqref{eq:gamma_recipr}, we can rewrite
		\begin{align*}
			&E_{(\alpha_1,\dots, \alpha_m),\beta}(z_1,\dots,z_m) \\
			&\quad =  \frac{1}{2\pi \alpha_1 i } \int_{\gamma(R,\theta)} \exp\left(\zeta^{1/\alpha_1}\right) \zeta^{\frac{1-\beta}{\alpha_1} - 1} \sum_{k = 0}^\infty \sum_{k_1 + \dots + k_m = k} \binom{k}{k_1, \dots, k_m} \\ &\qquad \qquad \qquad \qquad\times\prod_{j=1}^m z_j^{k_j} \zeta^{- \sum_{j=1}^m \frac{\alpha_j}{\alpha_1}k_j} \mathrm{d} \zeta\\
			&\quad = \frac{1}{2\pi \alpha_1 i } \int_{\gamma(R,\theta)} \exp\left(\zeta^{1/\alpha_1}\right) \zeta^{ \frac{1-\beta}{\alpha_1} - 1} \sum_{k=0}^{\infty} \left( \frac{z_1}{\zeta} + \sum_{j=2}^m z_j \zeta^{-\alpha_j/\alpha_1}\right)^k  \mathrm{d}\zeta,
		\end{align*}
		by isolating the index $j=1$ and applying the multinomial theorem. Note that $1- \frac{\alpha_j}{\alpha_1} \in (0,1)$ for $j =2,\dots,m.$ Therefore, taking $R >  \abs{z_1} + K \sum_{j=2}^m R^{1-\frac{\alpha_j}{\alpha_1}}$ ensures convergence of the series. In case all $\abs{z_j}\leq K, j = 1,\dots,m$ we can fix $R$ as a constant depending only on $K$ and $\alpha_1,\dots, \alpha_m.$ In that case we deduce that 
		\[
		E_{(\alpha_1,\dots, \alpha_m), \beta} (z_1,\dots,z_m) = \frac{1}{2\pi \alpha_1 i} \int_{\gamma(R,\theta)} \frac{\exp\left(\zeta^{1/\alpha_1}\right) \zeta^{\frac{1-\beta}{\alpha_1}}} {\zeta - z_1 - \sum_{j=2}^m z_j \zeta^{1-\frac{\alpha_j}{\alpha_1}}}\mathrm{d}\zeta.
		\] 
		Repeating the arguments of \cite{LiLiuYamamoto2015} we obtain in case $\abs{z_1}>R$ and $\mu \leq \abs{\argument(z_1)} \leq \pi$ that 
		\[
		\abs{E_{(\alpha_1, \dots, \alpha_m), \beta}(z_1,\dots,z_m)} \leq \frac{C}{\abs{z_1}},
		\] 
		and in case $\abs{z_1} \leq R, \mu \leq \abs{\argument(z_1)} \leq \pi$ that 
		\[
		\abs{E_{(\alpha_1,\dots,\alpha_m),\beta}(z_1,\dots,z_m)} \leq C E_{\alpha_m ,\beta} \left(R + (m-1)K\right),
		\] 
		where $C \geq \max\{ 1, \frac{\beta+\alpha_1}{\alpha_m}\}.$ To proof this last inequality, we used the bound
		\begin{equation} \label{eq:gamma_bound}
			\frac{1}{\Gamma(\beta + \sum_{j=1}^m \alpha_j k_j)} \leq \frac{\max\{1, \frac{\beta + \alpha_1}{\alpha_m}\}}{\Gamma(\beta + \alpha_mk)}.
		\end{equation} 
		This last result \eqref{eq:gamma_bound} can be shown in the following way. For $k =0$ there is nothing to show, so assume $k\geq 1.$ For $\beta \geq 1,$ we have $\Gamma\left(\beta + \sum_{j=1}^m \alpha_j k_j\right) > \Gamma\left(\beta + \alpha_m k\right), $ since $\Gamma(z) $ is monotonically increasing for $z\geq 1$ and $\alpha_1>\alpha_2>\dots>\alpha_m>0.$ For $0< \beta < 1$, we use that $\Gamma(z+1) = z\Gamma(z)$ to write
		\begin{align*}
			\frac{\Gamma\left(\beta + \alpha_m k \right)}{\Gamma\left(\beta + \sum_{j = 1}^m \alpha_j k_j\right)} &= \frac{\Gamma\left(\beta + 1 + \alpha_m k \right) \left(\beta + \sum_{j=1}^m \alpha_jk_j\right)}{\Gamma\left(\beta + 1 + \sum_{j=1}^m \alpha_j k_j\right)\left(\beta + \alpha_m k\right)} \\
			&\leq \frac{\beta + \sum_{j=1}^m \alpha_j k_j}{\beta + \alpha_m k } \leq \frac{\beta}{\alpha_mk} + \frac{\alpha_1}{\alpha_m} \\
			&\leq \frac{\beta + \alpha_1}{\alpha_m},
		\end{align*}
		which is a constant that can be brought out of the summations.
	\end{proof}
	
	Using Lemma~\ref{lem:MMLsum} or scrutinising the above proof, we see that Lemma~\ref{lem:MMLbound} also stays valid in the limit case $\beta\to 0.$ Now, we continue solving the one-dimensional problem considered in this section. Employing \eqref{eq:laplace_prop}, the solution $T_n(t)$ takes the form
	\[
	T_n(t) = c_n T_n^1(t) + d_n T_n^2(t), \quad c_n,d_n\in\RR,
	\]
	with 
	\begin{align*}
		T_n^1(t) &= \rho c\tau_q^\alpha \mathcal{D}_t^\alpha G(t) + a \tau_q^\alpha \int_0^t \mathcal{D}_t^\alpha G(s)\mathrm{d}s + \rho c G(t) \\
		T_n^2(t) &= \rho c \tau_q^\alpha \int_0^t \mathcal{D}_t^\alpha G(s)\mathrm{d}s,
	\end{align*}
	where $G(t)$ is given by \eqref{eq:Gt}. The results in \cite[Theorem 2.3]{Bazhlekova2021} show that
	\begin{align*}
	    \mathcal{D}_t^\alpha G(t) &= \frac{1}{\rho c \tau_q^\alpha} E_{(\alpha+1,1,\alpha),1} \left( \frac{-\sigma_n}{\rho c \tau_q^\alpha} t^{\alpha+1}, \frac{-a}{\rho c}t,\frac{-1}{\tau_q^\alpha}t^\alpha\right) \\
 		\int_0^t\mathcal{D}_t^\alpha G(s)\mathrm{d}s &= \frac{t}{\rho c \tau_q^\alpha} E_{(\alpha+1,1,\alpha),2} \left( \frac{-\sigma_n}{\rho c \tau_q^\alpha} t^{\alpha+1},\frac{-a}{\rho c}t, \frac{-1}{\tau_q^\alpha}t^{\alpha}\right).
	\end{align*}
	Hence, we find that
	\begin{multline*}
		\label{eq:Tn}
		T_n^1(t) =  E_{(\alpha+1,1,\alpha),1}\left( \frac{-\sigma_n}{\rho c \tau_q^\alpha} t^{\alpha+1},\frac{-a}{\rho c}t, \frac{-1}{\tau_q^\alpha}t^{\alpha}\right) \\
		+ \frac{a}{\rho c} tE_{(\alpha+1,1,\alpha),2}\left( \frac{-\sigma_n}{\rho c \tau_q^\alpha} t^{\alpha+1},\frac{-a}{\rho c}t, \frac{-1}{\tau_q^\alpha}t^{\alpha}\right) \\
		+ \frac{t^\alpha}{\tau_q^\alpha} E_{(\alpha+1,1,\alpha),\alpha+1}\left( \frac{-\sigma_n}{\rho c \tau_q^\alpha} t^{\alpha+1},\frac{-a}{\rho c}t, \frac{-1}{\tau_q^\alpha}t^{\alpha}\right)  \nonumber
	\end{multline*}
	and
\[
	T_n^2(t) =  t E_{(\alpha+1,1,\alpha),2}\left( \frac{-\sigma_n}{\rho c \tau_q^\alpha} t^{\alpha+1}, \frac{-a}{\rho c}t, \frac{-1}{\tau_q^\alpha}t^{\alpha}\right).
	\]
	From Lemma~\ref{lem:MMLsum}, we have that
	\begin{multline*}
		E_{(\alpha+1,1,\alpha),1}\left( \frac{-\sigma_n}{\rho c \tau_q^\alpha} t^{\alpha+1},\frac{-a}{\rho c}t, \frac{-1}{\tau_q^\alpha}t^{\alpha}\right)\\
		= 1 - \frac{\sigma_n}{\rho c \tau_q^\alpha} t^{\alpha+1} E_{(\alpha+1,1,\alpha),\alpha+2}\left( \frac{-\sigma_n}{\rho c \tau_q^\alpha} t^{\alpha+1}, \frac{-a}{\rho c}t, \frac{-1}{\tau_q^\alpha}t^{\alpha}\right) \\
		-\frac{a}{\rho c}t E_{(\alpha+1,1,\alpha),2}\left( \frac{-\sigma_n}{\rho c \tau_q^\alpha} t^{\alpha+1},\frac{-a}{\rho c}t, \frac{-1}{\tau_q^\alpha}t^{\alpha}, \right).\\
		-\frac{1}{\tau_q^\alpha}t^{\alpha} E_{(\alpha+1,1,\alpha),\alpha+1}\left( \frac{-\sigma_n}{\rho c \tau_q^\alpha} t^{\alpha+1},\frac{-a}{\rho c}t, \frac{-1}{\tau_q^\alpha}t^{\alpha} \right)
	\end{multline*}
	Hence, $T_n^1$ simplifies to 
	\[
	T_n^1(t) = 1 - \frac{\sigma_n}{\rho c \tau_q^\alpha} t^{\alpha+1} E_{(\alpha+1,1,\alpha),\alpha+2}\left( \frac{-\sigma_n}{\rho c \tau_q^\alpha} t^{\alpha+1},\frac{-a}{\rho c} t ,\frac{-1}{\tau_q^\alpha}t^{\alpha} \right). 
	\]
	Moreover, from Lemma~\ref{lem:MMLbound}, it follows that
	\[
	\max_{t\in \I}\abs{T_n^i(t)} \le T_i, \quad \forall n\in\NN, \quad i=1,2. 
	\]
	The unknown function $u(x,t)$ can be represented as the series
	\[
	u(x,t) = \sum_{n=1}^{\infty} X_n(x)\left[(U_0,X_n)T_n^1(t) + (V_0,X_n) T_n^2(t)\right].
	\] 
	Hence, we have for all $t\in\I$ that 
	\begin{align*}
		\nrm{u(t)}_{\Leb^2(0,L)}^2 &=  \sum_{n=1}^{\infty} \abs{(U_0,X_n)T_n^1(t) + (V_0,X_n) T_n^2(t)}^2    \\
		& \le 2\max\{T_1^2,T_2^2\}  \sum_{n=1}^{\infty} \left[  \abs{(U_0,X_n)}^2 + \abs{(V_0,X_n)}^2 \right] \\
		& \le 2\max\{T_1^2,T_2^2\} \left(\nrm{U_0}_{\Leb^2(0,L)}^2 + \nrm{V_0}_{\Leb^2(0,L)}^2\right).
	\end{align*}
	Using Lemma~\ref{lem:diffMML}, we obtain that 
    	\begin{equation*} 
		\left(T^1_n\right)^\prime(t) =    \frac{-\sigma_n}{\rho c \tau_q^\alpha} t^\alpha E_{(\alpha+1,1, \alpha), \alpha+1}\left( \frac{-\sigma_n}{\rho c \tau_q^\alpha} t^{\alpha+1}, \frac{-a}{\rho c}t, \frac{-1}{\tau_q^\alpha}t^{\alpha}\right).
	\end{equation*}
	and
	\[
	\left(T^2_n\right)^\prime(t) =  E_{(\alpha+1,1,\alpha),1}\left( \frac{-\sigma_n}{\rho c \tau_q^\alpha} t^{\alpha+1}, \frac{-a}{\rho c}t, \frac{-1}{\tau_q^\alpha}t^{\alpha}\right).
	\]
Therefore, from Lemma~\ref{lem:MMLbound},
it follows for all $n\in\NN$ that
\[
\abs{\left(T^1_n\right)^\prime(t)} \le \left(\frac{\sigma_n}{\rho c \tau_q^\alpha}  \right)^{\half} \frac{\left(\frac{\sigma_n}{\rho c \tau_q^\alpha}  \right)^\half t^\alpha C}{1+\frac{\sigma_n}{\rho c \tau_q^\alpha} t^{\alpha+1}} \le \sigma_n^{\half} \widetilde{M} C =: \sigma_n^{\half}  \dot T_1 
\]
and
\[ 
\max_{t\in \I}\abs{\left(T^2_n\right)^\prime(t)} \le \dot T_2, 
\]
where
\[
    \widetilde{M}:=\max_{(r,t)\in [0,\infty]\times [0,T]} \frac{r^\half t^\alpha}{1+rt^{\alpha+1}} < +\infty. 
\]
Using these bounds, we have for all $t\in(0,T]$ that 
\begin{align*}
\nrm{\pdt u(t)}_{\Leb^2(0,L)}^2 &=  \sum_{n=1}^{\infty} \abs{(U_0,X_n)\left(T^1_n\right)^\prime(t) + (V_0,X_n) \left(T^2_n\right)^\prime(t)}^2    \\
& \le 2 \dot T_1^2 \sum_{n=1}^{\infty}  \sigma_n \abs{(U_0, X_n)}^2 + 2 \dot T_2^2 \sum_{n=1}^\infty\abs{(V_0,X_n)}^2 \\
&\le 2\max\{\dot T_1^2,\dot T_2^2\} \left( \nrm{U_0}_{\Hi^1_0(0,L)}^2+ \nrm{V_0}_{\Leb^2(0,L)}^2 \right).
\end{align*}
Hence, we can conclude that 
\[
\max_{t\in \I} \nrm{u(t)}_{\Leb^2(0,L)}^2 + \max_{t\in \I} \nrm{\pdt u(t)}_{\Leb^2(0,L)}^2  \le C
\]
if $U_0\in \Hi_0^1(0,L)$ and $V_0\in \Leb^2(0,L)$.

	
	\section{The single-phase-lag problem} \label{sec_exis} 
	
	This section is devoted to constructing a weak solution to \eqref{eq:problem} through Rothe's method \cite{Kavcur1985}. This section is structured as follows. First, the variational formulation and its discrete analogue are posed. Next, the existence and uniqueness of a solution $u_i$ to the discrete variational problem at each time slice $i$ is deduced. Given those solutions, we prove some a priori estimates in appropriate norms. Finally, we construct the Rothe functions and show that they possess a converging subsequence whose limit constitutes a weak solution of \eqref{eq:problem}. In the following section, the uniqueness of a weak solution will be established, implying that the whole Rothe sequence converges to this weak solution.
	
	\subsection{Weak formulation} \label{subsec_VF} 
	
	Note that the convolution map $f \mapsto\galpha  \ast f$ is a bounded operator $\lpIlp \to \lpIlp$ with norm bounded by $\nrm{\galpha}_{\Leb^1(0,T)},$ as can be seen from Young's inequality for convolutions. Therefore, if $\pdt u \in \lpIlp$, by Lemma~\ref{lem:pdtconv}, we have that $\galpha \ast \pdt u  = \pdt \left( \galpha\ast (u-U_0)\right)$ is an element of $\lpIlp.$
	Now, the weak formulation can be stated as follows.
	
	\begin{definition}[Weak formulation (SPL)] \label{def:wf}
		Find $u\in \lpkIX{\infty}{\hko{1}}\cap \cIX{\lp{2}}$ with 
		$\pdt u \in \lpkIX{2}{\lp{2}} $ and 
		$\pdt\left(\galpha\ast \left(\pdt u - V_0\right)\right) \in \lpkIX{2}{{\hko{1}}^\ast}$ such that for a.a.\ $t \in (0,T)$ it holds that
		\begin{multline}
			\label{eq:VF} 
			\rho c \tq^\alpha \inpd{\pdt\left(\galpha\ast\left(\pdt u - V_0\right)\right)(t)}{\varphi}+ a \tq^\alpha \scal{\left(\galpha\ast \pdt u \right)(t)}{\varphi} \\ + \rho c \scal{\pdt u(t)}{\varphi} + \mathcal{L}\scal{u(t)}{\varphi} = \scal{F(t)}{\varphi},
		\end{multline}
		for all $\varphi \in \hko{1}.$
	\end{definition}

	\subsection{Time discretisation} \label{subsec_time_disc}
	Let $n\in\NN$ be given. We discretise the time interval $\Iopen$ in $n$ subintervals according to the nodes $t_i = i\tau$ for $i=0,1,\dots,n$ where $\tau = T/n$ is the time step. We define $\lceil t\rceil_{\tau} = t_i$ for $t\in(t_{i-1},t_i]$.  For a function $z(t)$ we will write $z_i = z(t_i)$ for its evaluation at the time steps. The backward Euler method is used for approximating the first and second order derivatives at each $t_i$, i.e. 
	\begin{equation}
		\label{eq:backwEuler}
		\delta z_i = \frac{z_i - z_{i-1}}{\tau} \quand \delta^2 z_i = \frac{\delta z_i - \delta z_{i-1}}{\tau} = \frac{z_i - z_{i-1} }{\tau^2} - \frac{\delta z_{i-1}}{\tau}.
	\end{equation}
	We set $u_0 = U_0$ and $\delta u_0 = V_0$ according to the initial conditions.
	
	Given a kernel $\kappa\colon (0,T] \to \mathbb{R}$ and a function $z \colon [0,T]\to \mathbb{R}$, we define the following discrete convolution approximations  of $(\kappa \ast z)(t_i)$:
	\begin{align}
		\left(\kappa \ast z \right)^{c}_i &:= \sum_{\ell=1}^i \kappa_{i+1-\ell}z_\ell  \tau. \label{eq:disconv1}
	\end{align}
	If $z_0=0$, then we can properly define $\left(\kappa \ast z \right)^{c}_0:=0.$
	
	We record the following two lemmas which will be needed and crucial in the estimates below. It is a discrete version of the inequality \eqref{eq:zacher_corol}. For the proof, we refer to \cite[Lemma~3.2]{Slodicka2016}. 
	\begin{lemma}
		\label{lem:ex1}
		Let	$\tau = T/n$ be the time step, where $n\in \NN$ is the number of time discretisation intervals. Let $\{z_i\}_{i\in\NN\cup\{0\}}$ and $\{\kappa_i\}_{i\in\NN}$ be two sequences of real numbers. Assume that $\kappa_{i+1} \leq \kappa_{i}$ for all $i\in \NN$ and $(\kappa \ast z)^{c}_0 = 0$. Then
		\begin{equation*}
			\label{eq:Dconv}
			2 \delta (\kappa\ast z)^{c}_i z_i \geq \delta \left(\kappa \ast z^2\right)^{c}_i + \kappa_i z_i^2
		\end{equation*}
	\end{lemma}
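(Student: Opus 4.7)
The plan is to reduce the stated inequality to a pointwise non-negativity statement by expanding both discrete convolutions via their definition \eqref{eq:disconv1} and then completing squares on the cross terms.

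First, I would compute explicitly, for $i \geq 1$,
$$\delta (\kappa \ast z)^{c}_i = \kappa_1 z_i + \sum_{\ell=1}^{i-1}(\kappa_{i+1-\ell} - \kappa_{i-\ell}) z_\ell,$$
and the analogous identity with $z_\ell$ replaced by $z_\ell^2$ for $\delta (\kappa \ast z^2)^{c}_i$. Multiplying the first by $2 z_i$ and subtracting the second together with $\kappa_i z_i^2$, the claim $2 \delta (\kappa \ast z)^{c}_i z_i - \delta (\kappa \ast z^2)^{c}_i - \kappa_i z_i^2 \geq 0$ reduces to
$$(\kappa_1 - \kappa_i) z_i^2 + \sum_{\ell=1}^{i-1}(\kappa_{i+1-\ell} - \kappa_{i-\ell})\bigl(2 z_\ell z_i - z_\ell^2\bigr) \geq 0.$$

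The key algebraic step is then to rewrite $2 z_\ell z_i - z_\ell^2 = z_i^2 - (z_i - z_\ell)^2$ inside the sum. Monotonicity of $\kappa$ gives $\kappa_{i+1-\ell} - \kappa_{i-\ell} \leq 0$ for every $\ell$, so the $(z_i - z_\ell)^2$ contribution comes with a non-negative overall sign and can be dropped. A substitution $j = i - \ell$ then telescopes the remaining coefficient of $z_i^2$:
$$\sum_{\ell=1}^{i-1}(\kappa_{i+1-\ell} - \kappa_{i-\ell}) = \sum_{j=1}^{i-1}(\kappa_{j+1} - \kappa_j) = \kappa_i - \kappa_1,$$
which exactly cancels the boundary term $(\kappa_1 - \kappa_i) z_i^2$. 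The base case $i = 1$ is immediate since $(\kappa \ast z)^{c}_0 = 0$ reduces the claim to $2 \kappa_1 z_1^2 \geq \kappa_1 z_1^2 + \kappa_1 z_1^2$, which holds with equality.

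The main obstacle is recognising the square-completion identity $2 z_\ell z_i - z_\ell^2 = z_i^2 - (z_i - z_\ell)^2$: once this is in place, the monotonicity hypothesis on $\kappa$ converts directly into the required sign information, and the remaining sum telescopes cleanly. Every other step is mechanical bookkeeping.
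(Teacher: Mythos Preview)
Your argument is correct: the explicit formula for $\delta(\kappa\ast z)^{c}_i$, the completion of squares $2z_\ell z_i - z_\ell^2 = z_i^2 - (z_i - z_\ell)^2$, and the telescoping sum all check out, and the monotonicity of $\kappa$ gives exactly the sign needed on the remaining term $\sum_{\ell=1}^{i-1}(\kappa_{i-\ell}-\kappa_{i+1-\ell})(z_i-z_\ell)^2 \geq 0$. The paper does not actually prove this lemma in-text but refers to \cite[Lemma~3.2]{Slodicka2016}; your self-contained derivation is in the same spirit as the standard argument found there, so there is no substantive methodological difference to report.
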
 
	As a consequence, we get the following lemma.
	\begin{lemma}
		\label{lem:lemconv}
		Let the assumptions of Lemma~\ref{lem:ex1} be fulfilled. Then, it holds that
		\begin{equation*}
			\label{eq:convrelation}
			2\sum_{i=1}^j	 \delta (\kappa\ast z)^{c}_i z_i \geq \left(\kappa \ast z^2\right)^{c}_j  + \sum_{i=1}^j \kappa_{i} z_i^2.
		\end{equation*}
		assuming $\left( \kappa \ast \nrm{z}^2\right)^{c}_0 =0$.
	\end{lemma}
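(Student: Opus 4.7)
The plan is to deduce this corollary by summing the pointwise inequality from Lemma~\ref{lem:ex1} and exploiting a telescoping cancellation. Specifically, Lemma~\ref{lem:ex1} provides, at each index $i \in \{1,\dots,j\}$, the bound
\[
2 \delta (\kappa\ast z)^{c}_i z_i \geq \delta \left(\kappa \ast z^2\right)^{c}_i + \kappa_i z_i^2,
\]
so the natural first step is to add these $j$ inequalities together. The left-hand side immediately becomes $2\sum_{i=1}^j \delta (\kappa\ast z)^{c}_i z_i$, matching the stated conclusion, while the second term on the right contributes precisely $\sum_{i=1}^j \kappa_i z_i^2$.

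The core of the argument is then the telescoping identity applied to the first term on the right. Since $\delta a_i$ is the backward difference of $a_i$ (as recorded in \eqref{eq:backwEuler}), the sum
\[
\sum_{i=1}^j \delta \left(\kappa \ast z^2\right)^{c}_i
\]
collapses to $\left(\kappa \ast z^2\right)^{c}_j - \left(\kappa \ast z^2\right)^{c}_0$ (up to the standard $\tau$-scaling implicit in the discrete integration convention used throughout the paper). Invoking the hypothesis $\left(\kappa \ast \nrm{z}^2\right)^{c}_0 = 0$, the initial term vanishes, leaving exactly $\left(\kappa \ast z^2\right)^{c}_j$ on the right, which is the quantity appearing in the claim.

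Putting the three pieces together yields the desired inequality. There is essentially no difficult step here: the hypotheses of Lemma~\ref{lem:ex1} (monotonicity of $\kappa$ and the vanishing initial discrete convolution) ensure both the applicability of the pointwise bound at every index and the cancellation of the boundary term at $i=0$. The mildest subtlety, and the only thing to be careful about, is the bookkeeping of the $\tau$ factors between the definitions of $\delta$ and $(\kappa \ast z)^c_i$ in \eqref{eq:backwEuler} and \eqref{eq:disconv1}, so that the sum of the backward differences telescopes into the value at time $t_j$ as written in \eqref{eq:convrelation}. Once this is verified, the proof is complete in one line.
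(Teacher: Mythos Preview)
Your proposal is correct and matches the paper's approach exactly: the paper offers no explicit proof at all, introducing the lemma simply with ``As a consequence, we get the following lemma,'' which is precisely the sum-and-telescope argument you describe. Your flag about the $\tau$ bookkeeping is well placed, since with $\delta$ defined as in \eqref{eq:backwEuler} one has $\sum_{i=1}^j \delta a_i\,\tau = a_j - a_0$, and indeed the application in the proof of Lemma~\ref{lem:1} carries the $\tau$ factor in the sum; this is a harmless notational slip in the statement of Lemma~\ref{lem:lemconv} itself rather than a defect in your reasoning.
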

	
	For convenience, we state the summation by parts formula for bilinear mappings.

	\begin{lemma}
		\label{lem:exPP}
		Let $b\colon V \times V \to \RR$ be a bilinear form on a vector space $V$ and let $\{z_i\}_{i\in\NN}, \{w_i\}_{i\in\NN}$ be two sequences in $V.$ Then
		\begin{equation*}
			\label{eq:pp}
			\sum_{i=1}^j b\left(z_i,w_i-w_{i-1}\right) 
			= b\left(z_j,w_j\right) - b\left(z_0,w_0\right) - \sum_{i=1}^j b\left( \delta z_i, w_{i-1}\right) \tau.
		\end{equation*}
	\end{lemma}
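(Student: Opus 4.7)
The plan is to use the standard Abel-type summation-by-parts manipulation, exploiting only the bilinearity of $b$ and the definition of the discrete derivative $\delta$ given in~\eqref{eq:backwEuler}. The argument contains no analytic subtleties: the sum is finite, $V$ is only required to be a vector space, and no continuity, norm, or regularity assumption is imposed on $b$.

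First I would treat a single summand. Linearity in the second argument of $b$ yields
\begin{equation*}
b(z_i, w_i - w_{i-1}) = b(z_i, w_i) - b(z_i, w_{i-1}).
\end{equation*}
Adding and subtracting $b(z_{i-1}, w_{i-1})$ and then applying linearity in the first argument, together with the identity $z_i - z_{i-1} = \tau \, \delta z_i$, I obtain
\begin{equation*}
b(z_i, w_i - w_{i-1}) = \bigl[b(z_i, w_i) - b(z_{i-1}, w_{i-1})\bigr] - b(\delta z_i, w_{i-1}) \, \tau.
\end{equation*}

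Summing this identity from $i = 1$ to $i = j$ causes the bracketed expression to telescope to $b(z_j, w_j) - b(z_0, w_0)$, while the remaining contributions collect into $-\sum_{i=1}^j b(\delta z_i, w_{i-1}) \, \tau$. Combining the two pieces gives exactly the claimed formula. Since every step is a direct consequence of bilinearity, the proof needs no estimate and no limiting argument; the only point that requires care is keeping the two arguments of $b$ in the correct slots so that linearity can be invoked in the correct variable at each manipulation.
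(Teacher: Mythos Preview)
Your proof is correct. The paper itself does not supply a proof of this lemma---it is simply stated ``for convenience'' as the discrete summation-by-parts formula---so there is nothing to compare against; your Abel-type telescoping argument is exactly the standard justification one would expect.
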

	
	\subsection{Existence at each time step} 
	The discrete variational formulation is obtained by approximating \eqref{eq:VF} at time $t =t_i$ by means of \eqref{eq:backwEuler} and \eqref{eq:disconv1}.
	\begin{definition}[Discrete weak formulation (SPL)] Find $u_i \in \hko{1}, i =1,\dots,n,$ such that     \begin{multline}
			\label{eq:VFI}
			\rho c\tq^\alpha \scal{\delta\left(\galpha \ast (\delta u-V_0)\right)^{c}_i}{\varphi} + a\tq^\alpha \scal{\left(\galpha \ast \delta u\right)^{c}_i}{\varphi} \\  + \rho c \scal{\delta u_i}{\varphi} + \mathcal{L}\scal{u_i}{\varphi} = \scal{F_i}{\varphi},
		\end{multline}
		for all $\varphi \in \hko{1}.$
	\end{definition}
	
	The existence of a solution on a single time step is established in the following lemma. 
	
	\begin{lemma}
		\label{lem:LMexun}
		Suppose that $F \in \Leb^2\left([0,T],\lp{2}\right)$, $U_0 \in \lp{2}$ and $V_0 \in \lp{2}.$ Then for any $i=1,\dots,n$ there exists a unique solution $u_i \in\hko{1}$  to \eqref{eq:VFI}.
	\end{lemma}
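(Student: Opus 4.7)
The plan is to proceed by induction on $i\in\{1,\dots,n\}$ and, at each step, recast \eqref{eq:VFI} as a coercive variational equation on $\hko{1}$ to which the Lax--Milgram lemma applies. Assume that $u_0=U_0, u_1,\dots,u_{i-1}$ have already been constructed, with $u_\ell\in\hko{1}$ for $1\leq\ell\leq i-1$ and $U_0,V_0\in\lp{2}$. The decisive observation is that in each discrete convolution \eqref{eq:disconv1} appearing in the formulation, only the summand with index $\ell=i$ involves the unknown $u_i$; the entire past history is explicit.

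Using \eqref{eq:backwEuler}--\eqref{eq:disconv1}, one isolates the $u_i$-contributions: $\rho c\,\delta u_i$ contributes $\frac{\rho c}{\tau} u_i$, the $\ell=i$ summand of $\left(\galpha\ast\delta u\right)^{c}_i$ contributes $\galpha(\tau)\,u_i$, and a further application of the discrete time derivative shows that $\delta\left(\galpha\ast(\delta u-V_0)\right)^{c}_i$ contributes $\frac{\galpha(\tau)}{\tau}\,u_i$. Collecting these, \eqref{eq:VFI} becomes
\[
   a(u_i,\varphi) = \ell_i(\varphi), \qquad \varphi \in \hko{1},
\]
where
\[
   a(w,\varphi) := \left(\frac{\rho c\,\tqa\,\galpha(\tau)}{\tau} + a\,\tqa\,\galpha(\tau) + \frac{\rho c}{\tau}\right)\scal{w}{\varphi} + \mathcal{L}\scal{w}{\varphi},
\]
and $\ell_i$ gathers all (now fully explicit) past-history terms together with $\scal{F_i}{\varphi}$.

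To finish, I would check the Lax--Milgram hypotheses on $\hko{1}$. Continuity of $a$ follows from $\mathcal{L}\scal{w}{\varphi}\leq C\nrm{w}_{\hk{1}}\nrm{\varphi}_{\hk{1}}$ and the boundedness of the scalar prefactor. Coercivity follows from \eqref{eq:LLell} combined with Friedrichs' inequality, noting that the extra $\lp{2}$-contribution has a strictly positive coefficient since $\galpha(\tau)>0$ by Lemma~\ref{lemma:g}(ii). For the right-hand side, boundedness of $\ell_i$ on $\hko{1}$ reduces, via Cauchy--Schwarz, to showing that each summand in $\ell_i$ lies in $\lp{2}$: this is clear since $U_0,V_0\in\lp{2}$, $u_\ell\in\hko{1}\subset\lp{2}$ for $\ell\leq i-1$ by the induction hypothesis, the weights $\galpha(t_k)$ are finite, and $F_i\in\lp{2}$ (interpreted pointwise or as a local time average, as is standard in Rothe's method). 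Lax--Milgram then produces a unique $u_i\in\hko{1}$, closing the induction.

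The argument is essentially routine; the only mildly delicate point is the bookkeeping of the discrete convolutions, namely carefully separating the single $u_i$-term from the fully explicit history in both $\left(\galpha\ast\delta u\right)^{c}_i$ and $\delta\left(\galpha\ast(\delta u-V_0)\right)^{c}_i$, and checking that all residual terms combine into a genuinely bounded linear functional on $\hko{1}$.
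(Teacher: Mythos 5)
Your proposal is correct and follows essentially the same route as the paper: isolate the single $u_i$-term from each discrete convolution, obtain the coercive bilinear form with prefactor $\frac{\rho c\,\tqa\,\galpha(\tau)}{\tau}+a\,\tqa\,\galpha(\tau)+\frac{\rho c}{\tau}$ plus $\mathcal{L}$, and apply Lax--Milgram inductively with the past history absorbed into a bounded linear functional. The bookkeeping of the convolution terms and the coercivity/continuity checks match the paper's argument.
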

	\begin{proof}
		We have for $i\geq 1$ that 
		\[
		\delta\left(\galpha \ast (\delta u-V_0)\right)^{c}_i = \left(\galpha \ast \delta(\delta u-V_0)\right)^{c}_i = \left(\galpha \ast \delta^2 u\right)^{c}_i.
		\]
		Hence, using \eqref{eq:backwEuler} and \eqref{eq:disconv1}, the discrete problem \eqref{eq:VFI} can be rewritten into the form
		\begin{equation*}
			\label{eq:af}
			\mathcal{A}\scal{u_i}{\varphi} = \mathcal{F}_i(\varphi) 
			,\quad \text{ for all } \varphi \in \hko{1},
		\end{equation*} 
		where $\mathcal{A}$ is the  $\hko{1}$-elliptic and continuous bilinear form given by
		\begin{equation*}
			\label{eq:bila}
			\mathcal{A} \scal{u_i}{\varphi}:=
			\left(\frac{\rho c \tq^\alpha}{\tau} \galpha(\tau) + a \tq^\alpha \galpha(\tau) + \frac{\rho c}{\tau}\right) \scal{u_i}{\varphi} + \mathcal{L}\left(u_i,\varphi\right),
		\end{equation*}
		and
		\begin{multline*}
			\mathcal{F}_i(\varphi):= \scal{F_i}{\varphi} + \rho c \tq^\alpha \galpha(\tau) \scal{\frac{u_{i-1}}{\tau} +\delta u_{i-1}}{\varphi}+\frac{\rho c}{\tau} \scal{u_{i-1}}{\varphi} \\- \rho c \tq^\alpha \sum_{k=1}^{i-1}\galpha(t_{i+1-k}) \scal{\delta^2 u_k}{\varphi}\tau  + a\tq^\alpha \galpha(\tau) \scal{u_{i-1}}{\varphi}\\ - a\tq^\alpha \sum_{k=1}^{i-1} \galpha(t_{i+1-k}) \scal{\delta u_k}{\varphi}\tau  ,
		\end{multline*}
		where in case $i = 1$ the summations yield no contribution. The boundedness of $\mathcal{F}_i$ follows inductively under the assumptions 
		$F \in \lpkIX{2}{{\lp{2}}}$ and $U_0 \in \hko{1}, V_0 \in \lp{2}.$ An application of the Lax-Milgram lemma \cite[Theorem~18.E]{Zeidler1990IIA} consecutively gives the existence and uniqueness of $u_i \in \hko{1}, i =1,\dots,n$ to the discrete problem \eqref{eq:VFI}.
	\end{proof}

	\subsection{A priori estimates} \label{subsec_estimates}
	Our next goal is to derive estimates on the discrete solutions $u_i$ and
	various related objects. These estimates will be crucial in the convergence study in the following subsection.
	
	\begin{lemma}
		\label{lem:1}
		Suppose that $F \in \Leb^2\left([0,T],\lp{2}\right)$, $U_0 \in \hko{1}$ and $V_0 \in \lp{2}.$ Then, there exist a positive constant $C$  
		such that for every $j = 1,\dots,n$ 
		the following estimate holds
		\begin{equation*}
			\left( \galpha \ast \nrm{\delta u-V_0}^2\right)^{c}_j + \sum_{i =1}^j \nrm{\delta u_i}^2 \tau  + \nrm{u_j}_{\hk{1}}^2 + \sum_{i=1}^{j} \nrm{u_i - u_{i-1}}_{\hk{1}}^2  \leq C.
		\end{equation*}
	\end{lemma}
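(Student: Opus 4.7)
The plan is to test the discrete formulation~\eqref{eq:VFI} with $\varphi = \delta u_i\in\hko{1}$, multiply by $\tau$, and sum from $i=1$ to $j$. Since $\delta u_0 = V_0$ is not generally zero, the key algebraic device to bring the discrete-kernel inequalities of Lemmas~\ref{lem:ex1} and~\ref{lem:lemconv} into play is to introduce the shifted sequence $z_i := \delta u_i - V_0$, which satisfies $z_0 = 0$; the leading memory term then becomes $\rho c \tqa \scal{\delta (\galpha \ast z)^{c}_i}{z_i + V_0}$.

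For the $z_i$ part of this splitting, Lemma~\ref{lem:lemconv} produces the coercive quantity $\frac{1}{2}(\galpha\ast\nrm{z}^2)^{c}_j + \frac{1}{2}\sum_{i=1}^j \galpha(t_i)\nrm{z_i}^2\tau$ on the LHS. The $V_0$ part telescopes via $\sum_{i=1}^j \delta (\galpha\ast z)^{c}_i \tau = (\galpha\ast z)^{c}_j$ into a single boundary contribution $\rho c \tqa\scal{(\galpha\ast z)^{c}_j}{V_0}$; Cauchy--Schwarz, Young's inequality, and the discrete Jensen bound $\nrm{(\galpha\ast z)^{c}_j}^2 \leq C(\galpha\ast\nrm{z}^2)^{c}_j$ (valid since $\galpha\geq 0$ and $\galpha\in\Leb^1(0,T)$ by Lemma~\ref{lemma:g}) allow it to be absorbed by the coercive piece for $\varepsilon$ small. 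For the elliptic contribution, symmetry of $\mathcal{L}$ and the polarization identity $2\mathcal{L}(u_i,u_i-u_{i-1}) = \mathcal{L}(u_i,u_i) - \mathcal{L}(u_{i-1},u_{i-1}) + \mathcal{L}(u_i-u_{i-1},u_i-u_{i-1})$ telescope to yield $\frac{1}{2}\mathcal{L}(u_j,u_j) + \frac{1}{2}\sum_{i=1}^j\mathcal{L}(u_i-u_{i-1},u_i-u_{i-1}) - \frac{1}{2}\mathcal{L}(U_0,U_0)$; $\hko{1}$-ellipticity together with Friedrichs' inequality then delivers $\nrm{u_j}_{\hk{1}}^2$ and $\sum_{i=1}^j\nrm{u_i-u_{i-1}}_{\hk{1}}^2$, while $\mathcal{L}(U_0,U_0)\leq C$ thanks to $U_0\in\hko{1}$. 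The linear damping term contributes $\rho c \sum_{i=1}^j\nrm{\delta u_i}^2 \tau$ on the LHS directly.

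The $a\tqa$ coupling and the source are bounded by Cauchy--Schwarz and Young's inequality: the former is controlled via $C_\varepsilon\sum_{i=1}^j\nrm{(\galpha\ast\delta u)^{c}_i}^2 \tau + \varepsilon\sum_i\nrm{\delta u_i}^2\tau$ and the same discrete Jensen bound as above, combined with $\nrm{\delta u}^2\leq 2\nrm{z}^2 + 2\nrm{V_0}^2$, dominates it by $C_\varepsilon \sum_{i=1}^j (\galpha\ast\nrm{z}^2)^{c}_i\tau + C + \varepsilon\sum_i\nrm{\delta u_i}^2\tau$; the latter by $C_\varepsilon\nrm{F}_{\Leb^2(\Iopen,\lp{2})}^2 + \varepsilon\sum_i\nrm{\delta u_i}^2\tau$. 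Denoting the entire left-hand side of the claimed estimate by $E_j$, choosing $\varepsilon$ small enough that every $\varepsilon$-term can be absorbed on the LHS yields
\[
E_j \leq C + C'\sum_{i=1}^j E_i\,\tau,
\]
whence the discrete Gr\"onwall lemma delivers $E_j\leq C$ uniformly in $j$ for sufficiently small $\tau$.

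The main obstacle throughout the argument is the presence of the inhomogeneous initial velocity $V_0$: Lemmas~\ref{lem:ex1} and~\ref{lem:lemconv} only apply after subtracting $V_0$, which in turn corrupts the other memory term $(\galpha\ast\delta u)^{c}_i$ and leaves a leftover $\scal{(\galpha\ast z)^{c}_j}{V_0}$ at time $t_j$. The reproducing estimate $\nrm{(\galpha\ast z)^{c}_j}^2 \leq C(\galpha\ast\nrm{z}^2)^{c}_j$ is the crucial ingredient that pushes this leftover into a small multiple of the coercive quantity delivered by Lemma~\ref{lem:lemconv}, thereby closing the estimate.
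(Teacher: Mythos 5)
Your proposal is correct and, for the bulk of the argument, coincides with the paper's proof: testing \eqref{eq:VFI} with $\varphi=\delta u_i\tau$, summing, splitting the leading memory term via $z_i=\delta u_i-V_0$ so that Lemma~\ref{lem:lemconv} applies, absorbing the leftover $\scal{(\galpha\ast z)^{c}_j}{V_0}$ through the discrete Cauchy--Schwarz bound $\nrm{(\galpha\ast z)^{c}_j}^2\le \nrm{\galpha}_{\Leb^1(0,T)}\,(\galpha\ast\nrm{z}^2)^{c}_j$, and telescoping the elliptic term by summation by parts are exactly the paper's steps. The one genuine divergence is the treatment of the coupling term $a\tqa\sum_{i=1}^j\scal{(\galpha\ast\delta u)^{c}_i}{\delta u_i}\tau$: the paper simply discards it as nonnegative, invoking the discrete positive-definiteness of the nonincreasing kernel $\galpha$ (the inequality from Slodi\v{c}ka's earlier work that Lemma~\ref{lem:ex1} is a variant of), whereas you estimate it by Young's inequality and the Jensen-type bound and then close with a discrete Gr\"onwall argument. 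Your route is more elementary -- it uses only nonnegativity and integrability of $\galpha$ -- but it costs you a Gr\"onwall step and a smallness restriction $\tau<1/(2C')$ to absorb the $i=j$ term of the recursion, neither of which the paper needs; since the constant remains uniform for all sufficiently small $\tau$, this restriction is harmless for the subsequent Rothe convergence analysis, so the proof stands.
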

	\begin{proof}
		Take $\varphi = \delta u_i \tau$ in \eqref{eq:VFI} and sum the resulting equations for $i = 1,\dots,j$ with $1 \leq j \leq n,$ we obtain that
		\begin{multline}
			\label{eq:sumj}
			\rho c \tq^\alpha \sum_{i =1}^{j} \scal{\delta\left(\galpha \ast \left( \delta u-V_0\right)\right)^{c}_i}{\delta u_i}\tau + a \tq^\alpha \sum_{i =1}^{j} \scal{\left(\galpha \ast \delta u\right)^{c}_i}{\delta u_i}\tau \\ + \rho c \sum_{i =1}^{j} \nrm{\delta u_i}^2 \tau + \sum_{i=1}^{j} \mathcal{L}\scal{u_i}{\delta u_i}\tau  = \sum_{i=1}^{j} \scal{F_i}{\delta u_i}\tau.
		\end{multline}
		The proof now proceeds by handling each term of the above expression. 
		The first convolution term on the left-hand side of \eqref{eq:sumj} can be rewritten as
		\begin{multline}
			\label{eq:sumj_first_term}
			\sum_{i =1}^{j} \scal{\delta\left(\galpha \ast \left( \delta u-V_0\right)\right)^{c}_i}{\delta u_i}\tau = \sum_{i =1}^{j} \scal{\delta\left(\galpha \ast \left( \delta u-V_0\right)\right)^{c}_i}{\delta u_i-V_0}\tau \\
			+  \scal{\left(\galpha \ast \left( \delta u-V_0\right)\right)^{c}_j}{V_0}. 
		\end{multline}
		The first term on the right-hand side of \eqref{eq:sumj_first_term} is bounded from below by
		\begin{equation*}
			\label{eq:lowbound1}
			\sum_{i =1}^{j} \scal{\delta\left(\galpha \ast \left( \delta u-V_0\right)\right)^{c}_i}{\delta u_i-V_0}\tau \geq \frac{1}{2} \left(\galpha \ast \nrm{\delta u -V_0}^2\right)^{c}_j, 
		\end{equation*}
		as an application of Lemma~\ref{lem:lemconv} shows.
		The last term of \eqref{eq:sumj_first_term}  is moved to the right-hand side of \eqref{eq:sumj}, where it is bounded by the Cauchy Schwarz and  $\varepsilon$-Young inequality as follows
		\begin{equation*}
			\abs{\scal{\left(\galpha \ast \left( \delta u-V_0\right)\right)^{c}_j}{V_0}}
			\le  C_{\varepsilon_1} \nrm{V_0}^2 + \varepsilon_1 \nrm{\left(\galpha \ast \left( \delta u-V_0\right)\right)^{c}_j}^2.
		\end{equation*}
		Since
		\[
		\sum_{l=1}^j (g_\alpha)_{j-l+1}\tau = \sum_{l=1}^j g_\alpha (t_j-t_{l-1})\tau \leq \int_0^{t_j}g_\alpha(t_j-s) \ds \leq \nrm{\galpha}_{\Leb^1(0,T)}, 
		\]
		we obtain that
		\begin{equation*}
			\abs{\scal{\left(\galpha \ast \left( \delta u-V_0\right)\right)^{c}_j}{V_0}}
			\le   C_{\varepsilon_1}  + \varepsilon_1  \left(\galpha \ast \nrm{\delta u -V_0}^2\right)^{c}_j,
		\end{equation*}
		where the constant $C_{\varepsilon_1}$ only depends on $\nrm{V_0}.$ 
		By \cite[Eq. (3.2)]{Slodicka1997}, we have that
		\begin{equation*}
			\label{eq:lowbound2} 
			\sum_{i=1}^{j} \scal{\left(\galpha \ast \delta u\right)^{c}_i}{\delta u_i}\tau  \geq 0.
		\end{equation*}
		For the term involving the bilinear form $\mathcal{L}$,  we use that $\matrix{k}$ is uniformly elliptic and symmetric. Lemma~\ref{lem:exPP} shows that
		\begin{align*}
			\label{eq:lowboundLk}
			\sum_{i=1}^{j} \scal{\matrix{k} \nabla u_i}{\delta\nabla u_i}\tau &= \frac{1}{2} \scal{\matrix{k} \nabla u_j}{\nabla u_j} - \frac{1}{2} \scal{\matrix{k} \nabla U_0}{\nabla U_0}\nonumber\\ & \quad + \frac{1}{2} \sum_{i=1}^{j} \scal{\matrix{k}\left(\nabla u_i - \nabla u_{i-1}\right)}{\nabla u_i-\nabla u_{i-1}} \nonumber\\
			& \geq \frac{\widetilde{k}}{2} \nrm{\nabla u_j}^2 -C + \frac{\widetilde{k}}{2} \sum_{i\leq j} \nrm{\nabla u_i - \nabla  u_{i-1}}^2
		\end{align*}
		and (as $a\geq 0$)
		\begin{equation*} 
			\label{eq:lowboundLa}
			a \sum_{i=1}^{j}\scal{u_i}{\delta u_i}\tau = \frac{a}{2}\left(\nrm{u_j}^2 - \nrm{U_0}^2 + \sum_{i=1}^{j} \nrm{u_i- u_{i-1}}^2 \right) \geq -\frac{a}{2} \nrm{U_0}^2,
		\end{equation*}
		where $C$ depends on $\nrm{\matrix{k}}_{\Lp{\infty}}$ and $\nrm{U_0}_{\hk{1}}.$
		
		The right-hand side of \eqref{eq:sumj} is estimated by use of the $\varepsilon$-Young inequality as
		\begin{equation*}
			\label{eq:boundF}
			\abs{\sum_{i=1}^{j} \scal{F_i}{\delta u_i} \tau} \leq C_{\varepsilon_2} + \varepsilon_2 \sum_{i=1}^{j} \nrm{\delta u_i}^2 \tau,
		\end{equation*}
		where $C_{\varepsilon_2}$ only depends on $\nrm{F}_{
			\Leb^2\left([0,T],\lp{2}\right)}.$
		
		Summarising the above results, we obtain the estimate
		\begin{multline*}
			\rho c \tq^\alpha\left( \frac{1}{2} -\veps_1\right) \left( \galpha \ast \nrm{\delta u-V_0}^2\right)^{c}_j + \left(\rho c  -\varepsilon_2\right) \sum_{i=1}^{j} \nrm{\delta u_i}^2 \tau \\
			+ \frac{\widetilde{k}}{2} \nrm{\nabla u_j}^2 + \frac{\widetilde{k}}{2} \sum_{i=1}^{j} \nrm{\nabla u_i - \nabla u_{i-1}}^2 
			\leq C + C_{\varepsilon_1} + C_{\varepsilon_2}.
		\end{multline*}
		Fixing $\varepsilon_1$ and $\varepsilon_2$ sufficiently small and applying the Friedrichs' inequality yield the stated result. 
	\end{proof}
	
	\begin{lemma} \label{lem:boundsDY}
		Let the assumptions of Lemma~\ref{lem:1} be fulfilled. Then, there exists a positive constant $C$ such that
		\begin{equation*}
			\label{eq:bound0} 
			\sum_{j=1}^n \nrm{\left(\galpha \ast \delta u \right)^{c}_j}^2 \tau  \leq C.
		\end{equation*}
		\begin{proof} 
			Note that by the discrete Young inequality, we have that 
			\begin{align*}
				\sum_{j=1}^n \nrm{\left(\galpha \ast \delta u\right)^{c}_j}^2 \tau
				& = \int_\Omega \sum_{j=1}^n \abs{\left(\galpha \ast \delta u\right)^{c}_j(\X)}^2 \tau \di\X  \\ 
				& \leq  \left(\sum_{j=0}^{n-1} \left(\galpha\right)_{j+1} \tau\right)^2 \sum_{j=1}^n \nrm{\delta u_j}^2\tau \\
				&\leq C \nrm{\galpha}^2_{\Leb^1(0,T)} 
			\end{align*} 
			by means of Lemma~\ref{lem:1}.
		\end{proof} 
	\end{lemma}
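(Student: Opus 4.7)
The plan is to bound the convolution-norm sum by invoking the discrete version of Young's convolution inequality together with the a priori bound from Lemma~\ref{lem:1}. First, I would swap the $\Leb^2(\domain)$-norm with the sum by writing
\[
\sum_{j=1}^n \nrm{\left(\galpha \ast \delta u\right)^{c}_j}^2 \tau = \int_\domain \sum_{j=1}^n \abs{\left(\galpha \ast \delta u\right)^{c}_j(\X)}^2 \tau \, \di \X,
\]
so that I can reason pointwise in $\X$ on the inner sum, which is a pure scalar discrete convolution.

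Next, for each fixed $\X$, I would apply the discrete Young inequality: for sequences $(a_k)$ and $(b_k)$ with $a_k=(\galpha)_{k}\tau$ and $b_k = \delta u_k(\X)$, the convolution $(\galpha\ast \delta u)^{c}_j(\X)=\sum_{\ell=1}^j (\galpha)_{j+1-\ell} \delta u_\ell(\X)\tau$ satisfies
\[
\sum_{j=1}^n \abs{(\galpha\ast \delta u)^{c}_j(\X)}^2 \tau \leq \left(\sum_{k=0}^{n-1}(\galpha)_{k+1}\tau\right)^{\!2} \sum_{j=1}^n \abs{\delta u_j(\X)}^2\tau.
\]
The prefactor is a Riemann sum for the decreasing function $\galpha$ on a partition of $(0,T]$ (using that $\galpha$ is decreasing by Lemma~\ref{lemma:g}(ii)), so it is controlled by $\nrm{\galpha}_{\Leb^1(0,T)}$, which is finite by Lemma~\ref{lemma:g}(i).

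Finally, I would integrate the pointwise estimate over $\domain$ and exchange the remaining integral and sum to conclude
\[
\sum_{j=1}^n \nrm{(\galpha\ast \delta u)^{c}_j}^2 \tau \leq \nrm{\galpha}_{\Leb^1(0,T)}^2 \sum_{j=1}^n \nrm{\delta u_j}^2 \tau \leq C,
\]
where the last inequality is exactly Lemma~\ref{lem:1}. The only technical point to watch is the first step, namely that the discrete $\ell^1$-norm of the kernel values weighted by $\tau$ is uniformly bounded in $n$ despite the singularity of $\galpha$ at zero; this holds because $\galpha$ is monotone decreasing, so the upper Riemann sum $\sum_{k=0}^{n-1}(\galpha)_{k+1}\tau=\sum_{k=0}^{n-1}\galpha((k+1)\tau)\tau$ is dominated by $\int_0^T \galpha(s)\,\ds$. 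No fractional-calculus or elliptic tools beyond Lemma~\ref{lem:1} and Lemma~\ref{lemma:g} are needed, so the argument is essentially a short application of discrete Young.
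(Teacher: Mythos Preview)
Your proposal is correct and follows essentially the same approach as the paper: both exchange the $\lp{2}$-norm with the outer sum, apply the discrete Young inequality pointwise to the scalar convolution, bound the kernel's discrete $\ell^1$-weight by $\nrm{\galpha}_{\Leb^1(0,T)}$, and finish with Lemma~\ref{lem:1}. Your write-up is in fact slightly more explicit than the paper's in justifying why $\sum_{k=0}^{n-1}(\galpha)_{k+1}\tau \le \nrm{\galpha}_{\Leb^1(0,T)}$ via the monotonicity of $\galpha$, but the argument is the same.
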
 
	
	\begin{lemma}
		\label{lem:bound3v2}
		Let the assumptions of Lemma~\ref{lem:1} be satisfied. Then, there exists a positive constant $C$ such that
		\begin{equation*}
			\label{eq:bound3v2}
			\sum_{j=1}^n \nrm{\delta\left(\galpha \ast (\delta u-V_0)\right)^{c}_j}_{{\hko{1}}^\ast}^2 \tau \leq C.
		\end{equation*}
	\end{lemma}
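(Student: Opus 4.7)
The plan is to isolate the convolution term in the discrete variational formulation \eqref{eq:VFI} and then dualise. More precisely, for any test function $\varphi\in\hko{1}$ with $\nrm{\varphi}_{\hko{1}}\leq 1$, we read off from \eqref{eq:VFI} that
\[
\rho c\tq^\alpha \scal{\delta(\galpha\ast(\delta u-V_0))^{c}_i}{\varphi}
= \scal{F_i}{\varphi} - a\tq^\alpha \scal{(\galpha\ast\delta u)^{c}_i}{\varphi} - \rho c\scal{\delta u_i}{\varphi} - \mathcal{L}\scal{u_i}{\varphi}.
\]
Taking the supremum over such $\varphi$ on both sides and applying the Cauchy--Schwarz inequality together with the boundedness of $\mathcal{L}$ (see the inequality preceding \eqref{eq:LLell}) yields
\[
\nrm{\delta(\galpha\ast(\delta u-V_0))^{c}_i}_{\hko{1}^\ast}
\leq C\left( \nrm{F_i} + \nrm{(\galpha\ast\delta u)^{c}_i} + \nrm{\delta u_i} + \nrm{u_i}_{\hk{1}} \right).
\]

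Squaring this inequality, multiplying by $\tau$, and summing over $i=1,\dots,n$, I obtain
\[
\sum_{j=1}^n \nrm{\delta(\galpha\ast(\delta u-V_0))^{c}_j}_{\hko{1}^\ast}^2 \tau
\leq C \sum_{j=1}^n \left( \nrm{F_j}^2 + \nrm{(\galpha\ast\delta u)^{c}_j}^2 + \nrm{\delta u_j}^2 + \nrm{u_j}_{\hk{1}}^2 \right)\tau.
\]
The first sum is controlled by $\nrm{F}_{\Leb^2([0,T],\lp{2})}^2$ (up to a standard midpoint/endpoint argument for the time sampling of $F$, which is harmless under the $\Leb^2$ hypothesis). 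The second sum is bounded by Lemma~\ref{lem:boundsDY}. The third and fourth sums are bounded by Lemma~\ref{lem:1} (the third directly, and the fourth because $\nrm{u_j}_{\hk{1}}^2$ is already uniformly bounded, so summing over $j$ gives a bound proportional to $T$).

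There is no real obstacle here: the argument is a clean consequence of the discrete equation and the a priori estimates already proved. The only minor technical point is that, strictly speaking, $F_i$ is a pointwise value, so one should either assume enough regularity to sample $F$ or, more conventionally, replace $F_i$ by a time-averaged value $\tau^{-1}\int_{t_{i-1}}^{t_i} F(s)\,ds$; in either formulation, $\sum_i \nrm{F_i}^2 \tau$ is controlled by $\nrm{F}_{\Leb^2([0,T],\lp{2})}^2$, which is the only property used.
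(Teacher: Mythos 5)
Your proof is correct and is essentially identical to the paper's: both isolate the convolution term in \eqref{eq:VFI}, dualise via the supremum over unit test functions, and then sum the squared bounds using Lemma~\ref{lem:1} and Lemma~\ref{lem:boundsDY}. Your remark about sampling $F$ pointwise versus time-averaging is a fair technical aside that the paper does not address explicitly.
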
 
	\begin{proof} 
		Note that
		\begin{align*} 
			\nrm{\delta\left(\galpha \ast (\delta u-V_0)\right)^{c}_j}_{{\hko{1}}^\ast} &= \sup\limits_{\nrm{\varphi}_{\hko{1}}= 1} \abs{\inpd{\delta\left(\galpha \ast (\delta u-V_0)\right)^{c}_j}{\varphi}}\nonumber\\
			&=\frac{1}{\rho c \tq^\alpha} \sup\limits_{\nrm{\varphi}_{\hko{1}}=1} \left\lvert\scal{F_j}{\varphi} - \mathcal{L}\left(u_j,\varphi\right) \right.\nonumber \\
			& \qquad \qquad \qquad - \rho c \scal{\delta u_j}{\varphi} -a\tq^\alpha \left.\scal{\left(\galpha \ast \delta u\right)_j}{\varphi}\right\rvert\nonumber \\
			& \leq C \left(\nrm{F_j} + \nrm{u_j}_{\hk{1}} + \nrm{\delta u_j} + \nrm{\left(\galpha \ast \delta u\right)^{c}_j} \right).\label{eq:b3v2}
		\end{align*}
		Applying the rule $(a+b)^2 \leq 2a^2 + 2b^2$ yields
		\begin{multline*} 
			\sum_{j = 1}^n \nrm{\delta\left(\galpha \ast (\delta u-V_0)\right)^{c}_j}^2 \tau \\ \leq C \left(\sum_{j=1}^n \nrm{F_j}^2\tau + \sum_{j=1}^n \nrm{u_j}^2_{\hk{1}} \tau  + \sum_{j= 1}^n \nrm{\delta u_j}^2 \tau + \sum_{j=1}^n \nrm{\left(\galpha \ast \delta u\right)^{c}_j}^2\tau \right),
		\end{multline*}
		which is uniformly bounded by Lemma~\ref{lem:1} and \ref{lem:boundsDY}.
	\end{proof}

	\subsection{Existence} \label{subsec_rothe_conv} 
	
	The existence of a weak solution in shown in this subsection. To achieve this goal, we introduce the Rothe functions $[0,T]\to \lp{2}$, which are build from the solutions $u_i, i = 0,\dots,n$ at each time slice as follows,
	\begin{align*}
		v_n \colon t &\mapsto \begin{cases}
			U_0, & t =0 \\ u_{i-1} + (t-t_{i-1}) \delta u_i, & t \in (t_{i-1},t_i], \quad  1 \leq i \leq n,
		\end{cases}\\
		\overline{v}_n \colon t & \mapsto \begin{cases}
			U_0, & t = 0 \\ u_i, & t \in (t_{i-1},t_i], \quad 1 \leq i \leq n,
		\end{cases} \\
		\overline{w}_n \colon t& \mapsto \begin{cases}
			V_0, & t = 0 \\ \delta u_i, & t\in(t_{i-1},t_i], \quad 1 \leq i \leq n. 
		\end{cases}
		\end{align*}
		
		Note that $\pdt v_n = \overline{w}_n.$ Similar notations are used for $\overline{F}_n$ and $ \overline{\ggamma}.$ 
		Additionally, we define for two functions $\kappa$ and $z$ \cite{VanBockstal2020}:
		\begin{align*}
			C_{\kappa ,z}^n \colon t &\mapsto \begin{cases}
				0, & t =  0  \\
				\left(\kappa\ast z \right)^{c}_{i-1} + (t-t_{i-1})\delta \left(\kappa\ast z \right)^{c}_i, & t \in (t_{i-1}, t_i],
			\end{cases}\\
			\overline{C}_{\kappa,z}^n \colon t & \mapsto \begin{cases}
				0, & t = 0 \\ 
				\left(\kappa\ast z \right)^{c}_{i}, & t \in (t_{i-1},t_i].
			\end{cases} 
		\end{align*}
		Note that for $t \in (t_{i-1},t_i]$,  we have that
		\[
		\pdt \left(C^n_{\overline{\galpha}_n, \overline{w}_n-V_0} \right)(t)  = \delta \left(\overline{\galpha}_n \ast (\overline{w}_n-V_0)\right)^{c}_i  = \delta\left(\galpha \ast (\delta u-V_0)\right)^{c}_i
		\]
		as $\overline{w}_n(t_k) = \delta u_k.$ 
		Moreover, we have that
		\begin{align*}
			\left(\overline{\galpha}_{n} \ast \overline{w}_{n}\right)(\lceil t\rceil_\tau) = \int_0^{t_i} \overline{\galpha}_{n}(t_i-s) \overline{w}_{n}(s)\ds  & = \sum_{j = 1}^i \int_{t_{j-1}}^{t_j} \overline{\galpha}_{n}(t_i - s)\delta u_j \ds \\ & = \sum_{j=1}^{i } \galpha(t_{i-j+1}) \delta u_j \tau
		\end{align*}
		as for $s\in (t_{j-1},t_j]$, we have $t_i - s \in [t_{i-j}, t_{i-j+1})$ and so $\overline{\galpha}_{n}(t_i-s) = \galpha(t_{i-j+1}).$
		
		Now, we extend \eqref{eq:VFI} to the whole time frame. In terms of the Rothe functions, the discrete variational formulation now reads as
		\begin{multline}
			\label{eq:extrothe_v2}
			\rho c \tq^\alpha \scal{\pdt \left(C^n_{\overline{\galpha}_n, \overline{w}_n-V_0}\right)(t) }{\varphi} + a\tq^\alpha \scal{(\overline{\galpha}_n \ast \overline{w}_n)(\lceil t\rceil_{\tau}) }{\varphi} \\ + \rho c \scal{\pdt v_n(t)}{\varphi} + \mathcal{L}\scal{\overline{v}_n(t)}{\varphi} = \scal{\overline{F}_n(t)}{\varphi}, \quad \forall \varphi \in \hko{1}.
		\end{multline}

		In the remainder of this subsection, we show that a subsequence of the Rothe functions converges to a weak solution. The following subsection is then devoted to proving the uniqueness of a solution.
		
		\begin{theorem}[Existence (SPL)] \label{thm:existence spl}
			Suppose that $F \in \Leb^2\left([0,T],\lp{2}\right)$, $U_0 \in \hko{1}$ and $V_0 \in \lp{2}.$ Then, there exists a weak solution $u$ to \eqref{eq:VF} with $u \in \lpkIX{\infty}{\hko{1}}\cap \cIX{\lp{2}}$ for which $\pdt u \in \lpIlp$ and  $\pdt\left(\galpha \ast \left(\pdt u - V_0 \right)\right) \in \lpkIX{2}{{\hko{1}}^\ast}.$ 
		\end{theorem}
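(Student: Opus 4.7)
The plan is to pass to the limit $n\to\infty$ in the discrete identity \eqref{eq:extrothe_v2}, using the a priori estimates of Lemmas~\ref{lem:1}, \ref{lem:boundsDY} and \ref{lem:bound3v2} together with standard weak-compactness and Aubin--Lions arguments. Lemma~\ref{lem:1} ensures that $\{v_n\}$ and $\{\overline{v}_n\}$ are uniformly bounded in $\lpkIX{\infty}{\hko{1}}$, while $\{\overline{w}_n\}=\{\pdt v_n\}$ is bounded in $\lpIlp$. The estimate
\[
\nrm{v_n-\overline{v}_n}_{\lpIlp}^2 \le \tau\sum_{i=1}^n \nrm{u_i-u_{i-1}}^2 = \tau^3 \sum_{i=1}^n \nrm{\delta u_i}^2 \to 0
\]
guarantees that $v_n$ and $\overline{v}_n$ share the same weak limit. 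Passing to a subsequence (not relabelled), there exists $u\in\lpkIX{\infty}{\hko{1}}$ with $\pdt u\in\lpIlp$ such that $v_n, \overline{v}_n \wto u$ in $\lpkIX{2}{\hko{1}}$, $\pdt v_n \wto \pdt u$ in $\lpIlp$, and by Aubin--Lions also $v_n\to u$ in $\cIX{\lp{2}}$, yielding $u\in\cIX{\lp{2}}$ together with $u(0)=U_0$.

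Next, Lemma~\ref{lem:bound3v2} bounds $\pdt C^n_{\overline{\galpha}_n,\overline{w}_n-V_0}$ in $\lpkIX{2}{{\hko{1}}^\ast}$; since $C^n_{\overline{\galpha}_n,\overline{w}_n-V_0}(0)=0$, the sequence itself is bounded in $\Hi^1((0,T),{\hko{1}}^\ast)$. To identify the weak limit I would combine $\overline{\galpha}_n\to \galpha$ in $\Leb^1(0,T)$ (dominated convergence, since $\galpha$ is decreasing and integrable) with the weak convergence $\overline{w}_n\wto \pdt u$ in $\lpIlp$, via Young's convolution inequality, to conclude
\[
C^n_{\overline{\galpha}_n, \overline{w}_n-V_0} \wto \galpha\ast(\pdt u - V_0) \quad \text{in } \Hi^1((0,T),{\hko{1}}^\ast),
\]
so in particular $\pdt(\galpha\ast(\pdt u-V_0))\in\lpkIX{2}{{\hko{1}}^\ast}$. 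A similar splitting yields $(\overline{\galpha}_n\ast\overline{w}_n)(\lceil\cdot\rceil_\tau)\wto \galpha\ast\pdt u$ in $\lpIlp$, the shift from $\lceil t\rceil_\tau$ to $t$ contributing an error of order $\tau$ thanks to absolute continuity in time of $\galpha\ast\pdt u$.

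Testing \eqref{eq:extrothe_v2} against $\varphi\in\hko{1}$ and integrating in time against an arbitrary $\eta\in\Cont^\infty_c(0,T)$, all left-hand side terms pass to the limit by the weak convergences above together with the continuity and linearity of $\mathcal{L}$, while the right-hand side converges because $\overline{F}_n\to F$ in $\lpIlp$. The fundamental lemma of the calculus of variations then recovers \eqref{eq:VF} for a.e.\ $t\in(0,T)$ and every $\varphi\in\hko{1}$, completing the existence statement.

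The delicate point is the identification, in the second paragraph above, of the weak limit of the discrete convolution as precisely $\galpha\ast(\pdt u-V_0)$, since it requires coupling the strong $\Leb^1$-convergence of the piecewise constant kernel $\overline{\galpha}_n$ with the merely weak convergence of $\overline{w}_n$. The standard device is the splitting
\[
\overline{\galpha}_n\ast \overline{w}_n - \galpha\ast\pdt u = (\overline{\galpha}_n-\galpha)\ast\overline{w}_n + \galpha\ast(\overline{w}_n-\pdt u),
\]
where the first summand is controlled strongly by Young's inequality and the second passes to zero weakly thanks to the weak-to-weak continuity of convolution with an $\Leb^1$-kernel acting on $\lpIlp$. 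Once this identification is secured, together with the elementary verification that $\pdt v_n - \overline{w}_n \equiv 0$ a.e., the remaining bookkeeping is routine.
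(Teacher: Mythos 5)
Your proposal is correct in substance and reaches the same conclusion from the same a priori estimates, but it handles the delicate term $\pdt\bigl(\galpha\ast(\pdt u-V_0)\bigr)$ by a genuinely different route. The paper integrates the Rothe identity \eqref{eq:extrothe_v2} once in $t$ over $(0,\eta)$ and then again over $\eta\in(0,\xi)$, passes to the limit in the doubly integrated identity (where only the convolution $C^{n_k}$ itself, not its derivative, appears), and finally differentiates twice in $\xi$ — which forces it to verify the boundary term $\lim_{\xi\searrow 0}\scal{(\galpha\ast(\pdt v-V_0))(\xi)}{\varphi}=0$ before the second differentiation. You instead exploit that Lemma~\ref{lem:bound3v2} together with $C^n_{\overline{\galpha}_n,\overline{w}_n-V_0}(0)=0$ bounds $C^n_{\overline{\galpha}_n,\overline{w}_n-V_0}$ in $\Hi^1\left((0,T),{\hko{1}}^\ast\right)$, extract a weak limit there, identify its $\Leb^2$-in-time part via the splitting $(\overline{\galpha}_n-\galpha)\ast\overline{w}_n+\galpha\ast(\overline{w}_n-\pdt u)$, and then test against $\varphi\,\eta(t)$ with $\eta\in\Cont_c^\infty(0,T)$; the distributional derivative of the weak $\Hi^1$ limit is then automatically $\pdt(\galpha\ast(\pdt u-V_0))\in\lpkIX{2}{{\hko{1}}^\ast}$, so no differentiation of an integrated identity is needed. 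This is arguably cleaner, and the splitting is sound: $\overline{\galpha}_n\to\galpha$ in $\Leb^1(0,T)$ by dominated convergence (monotonicity of $\galpha$ gives the majorant), so Young's inequality kills the first summand, and convolution with a fixed $\Leb^1$ kernel is weak-to-weak continuous on $\lpIlp$ for the second. Two small points deserve tightening. First, the objects in \eqref{eq:extrothe_v2} are the \emph{discrete} convolutions, so before your splitting applies you must (i) replace $C^n$ by $\overline{C}^n$ (an $O(\tau)$ error via Lemma~\ref{lem:bound3v2}, as the paper does) and (ii) undo the shift $\lceil t\rceil_\tau\mapsto t$ in $(\overline{\galpha}_n\ast\overline{w}_n)(\lceil t\rceil_\tau)$; your justification of (ii) by "absolute continuity of $\galpha\ast\pdt u$" refers to the limit rather than to the approximating sequence — the correct argument controls $(\overline{\galpha}_n\ast\overline{w}_n)(\lceil t\rceil_\tau)-(\overline{\galpha}_n\ast\overline{w}_n)(t)$ uniformly in $n$, e.g.\ by continuity of translation in $\Leb^1$ for the kernel combined with the uniform $\lpIlp$ bound on $\overline{w}_n$, or by the discrete-derivative bound again. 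Second, the fundamental lemma gives \eqref{eq:VF} for a.e.\ $t$ for each fixed $\varphi$; to obtain it for a.a.\ $t$ and all $\varphi$ simultaneously, invoke a countable dense subset of $\hko{1}$. Neither point is a real gap.
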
 
		
		\begin{proof} 
			The Rellich-Kondrachov theorem \cite[Theorem~6.3]{Necas2011} gives us the compact embedding of $\hko{1}$ in $\lp{2}.$ The estimates from Lemma~\ref{lem:1} show in particular that
			\begin{equation*}
				\nrm{\overline{v}_n(t)}_{\hko{1}} \leq C, \quad \forall t \in [0,T],
			\end{equation*}
			and 
			\begin{equation*}
				\int_0^T \nrm{\pdt v_n(t)}_{\lp{2}}^2 \dt  = \sum_{i=1}^n \nrm{\delta u_i}_{\lp{2}}^2 \tau \leq C.
			\end{equation*}
			Hence, by the Aubin-Lions lemma \cite[Lemma~1.3.13]{Kavcur1985}, we find a function $v \in \cIX{\lp{2}} \cap \lpkIX{\infty}{\hko{1}}$ with $\pdt v \in \lpIlp$ and a corresponding subsequence $\{v_{n_k}\}_{k \in \NN}$ of $\{v_n\}_{n\in\NN}$ having the following properties, as $k \to \infty,$
			\begin{equation*}
				\begin{array}{rl}
					v_{n_k} \to v & \text{ in } \cIX{\lp{2}}\\
					v_{n_k}(t) \wto v(t) & \text{ in } \hko{1}, \text{ for all } t \in [0,T]\\
					\overline{v}_{n_k}(t) \wto v(t) & \text{ in } \hko{1}, \text{ for all } t \in [0,T]\\
					\pdt v_{n_k} =  \overline{w}_{n_k} \wto \pdt v & \text{ in } \lpIlp.
				\end{array}
			\end{equation*}
			Note that 
			\begin{align*}
				\int_0^T \nrm{\nabla v_{n_k}(t) - \nabla \overline{v}_{n_k}(t)}^2\dt & = \sum_{i=1}^{n_k} \int_{t_{i-1}}^{t_i} \nrm{(t-t_i)\delta \nabla u_i}^2 \dt \\
				&= \frac{\tau_{n_k}}{3} \sum_{i=1}^{n_k} \nrm{\nabla\delta u_i}^2 \tau_{n_k}^2 \\ 
				& = \frac{\tau_{n_k}}{3} \sum_{i=1}^{n_k} \nrm{\nabla u_i - \nabla u_{i-1}}^2  \leq C \tau_{n_k},
			\end{align*}
			where we have set $\tau_{n_k} = T/n_k.$ As a result, $\{v_{n_k}\}_{k\in \NN}$ and $\{\overline{v}_{n_k}\}_{k\in \NN}$ have the same limit $v$ in $\lpkIX{2}{\hko{1}}.$ 
			
			Next, we integrate \eqref{eq:extrothe_v2} over the interval $(0,\eta) \subset (0,T)$ to arrive at
			\begin{multline}
				\label{eq:ex_rothe_int}
				\rho c \tq^\alpha\scal{ C^{n_k}_{\overline{\galpha}_{n_k},\overline{w}_{n_k}-V_0}(\eta)}{\varphi}  + a\tq^\alpha\int_0^\eta \scal{(\overline{\galpha}_{n_k} \ast \overline{w}_{n_k})(\lceil t\rceil_{\tau}) }{\varphi} \dt    \\ + \rho c \int_0^\eta \scal{\pdt v_{n_k}(t)}{\varphi}\dt + \int_0^\eta \mathcal{L}\scal{\overline{v}_{n_k}(t)}{\varphi}\dt  = \int_0^\eta \scal{\overline{F}_{n_k}(t)}{\varphi}\dt. 
			\end{multline}
			Now, we will first discuss some convergence results. Using Lemma~\ref{lem:bound3v2}, we can estimate
			\begin{align*}
				&\abs{ \int_0^T \scal{C^{n_k}_{\overline{\galpha}_{n_k}, \overline{w}_{n_k}-V_0}(t) - \overline{C}^{n_k}_{\overline{\galpha}_{n_k},\overline{w}_{n_k}-V_0}(t)}{\varphi}\dt} \\&= \sum_{i = 1}^{n_k} \int_{t_{i-1}}^{t_i} \abs{\scal{(t-t_i)\delta\left(\galpha \ast (\delta u-V_0)\right)^{c}_i}{\varphi}} \dt \\ 
				&\leq \sum_{i=1}^{n_k} \tau_{n_k}^2\abs{\scal{ \delta\left(\galpha \ast (\delta u-V_0)\right)^{c}_i}{\varphi}}\dt \\
				& \leq  C\tau_{n_k}\sqrt{\sum_{i=1}^{n_k} \nrm{\delta\left(\galpha \ast (\delta u-V_0)\right)^{c}_i}_{{\hk{1}}^{*}}^2 \tau_{n_k}} \\
				& \leq C \tau_{n_k} \to 0 \quad \text{ as } k \to \infty,
			\end{align*}
			We note that
			$$
			\overline{C}^{n_k}_{\overline{\galpha}_{n_k}, \overline{w}_{n_k}-V_0}(t) = \sum_{j=1}^i \galpha(t_{i+1-j}) (\delta u_j-V_0) \tau_{n_k} = \left(\overline{\galpha}_{n_k} \ast (\overline{w}_{n_k}-V_0)\right) \left(\lceil t\rceil_\tau\right).
			$$
			The same arguments as in \cite[p. 22--25]{VanBockstal2021} yield the limit transition
			\begin{equation*}
				\abs{  \int_0^\xi \scal{\overline{C}^{n_k}_{\overline{\galpha}_{n_k}, \overline{w}_{n_k}-V_0}(\eta) - \left(\galpha \ast (\overline{w}_{n_k}-V_0)\right)(\eta)}{\varphi}   \di\eta } \to 0, \quad \text{ as } k \to \infty,
			\end{equation*}
			for any $\xi \in(0,T].$ It is here used that  $\overline{\galpha}_{n_k} \to \galpha$ pointwise in $(0,T)$ and that the estimate $\left( \galpha \ast \nrm{\delta u-V_0}^2\right)^{c}_j + \sum_{i=1}^j \nrm{\delta u_i}^2\tau_{n_k} \leq C$ is available (see Lemma~\ref{lem:1}).
			The operator
			\begin{equation*} 
				w \mapsto \int_0^\eta \scal{\left(\ggamma \ast w\right)(t)}{\varphi}\dt,
			\end{equation*} 
			with $\gamma \in (0,1)$ and $\eta \in(0,T)$ fixed, is a bounded linear functional on $\lpIlp$ since by Young's inequality for convolutions it holds that
			\begin{equation*}
				\abs{\int_0^\eta \scal{\left(g_{\gamma} \ast w\right)(t)}{\varphi}\dt } \leq \sqrt{T} \nrm{g_{\gamma}}_{\Leb^1\Iopen} \nrm{\varphi} \nrm{w}_{\lpIlp}. 
			\end{equation*}  
			Therefore, using $\overline{w}_{n_k} \wto \pdt v$ in $\lpIlp$ it finally holds that 
			\begin{equation*}
				\abs{  \int_0^\xi \scal{{C}^{n_k}_{\overline{\galpha}_{n_k}, \overline{w}_{n_k}-V_0}(\eta) - \left(\galpha \ast (\pdt v-V_0)\right)(\eta)}{\varphi}   \di\eta } \to 0, \quad \text{ as } k \to \infty,
			\end{equation*}
			for any $\xi \in(0,T).$
			Next, we note that
			\[
			\overline{C}^{n_k}_{\overline{\galpha}_{n_k}, \overline{w}_{n_k}}(t) = \left(\overline{\galpha}_{n_k} \ast \overline{w}_{n_k}\right)(\lceil t\rceil_{\tau_{n_k}})
			\]
			Hence,  we immediately see that 
			\[ 
			\abs{\int_0^\xi \scal{\overline{C}^{n_k}_{\overline{\galpha}_{n_k}, \overline{w}_{n_k}}(t) - \left( \galpha \ast \pdt v \right)(t)}{\varphi} \dt} \to 0 \quad \text{ as } k\to \infty. 
			\] 
			Furthermore, we have that
			\begin{align*}
				\abs{\int_0^\eta \mathcal{L}\scal{\overline{v}_{n_k}(t)}{\varphi} \dt - \int_0^\eta \mathcal{L}\scal{v(t)}{\varphi}\dt } &\to 0 \quad \text{ as } k \to \infty, \\
				\abs{\int_0^\eta \scal{\overline{F}_{n_k}(t)}{\varphi}\dt  - \int_0^\eta \scal{F(t)}{\varphi}\dt}& \to 0 \quad \text{ as } k \to \infty.
			\end{align*}
			
			Before making the limit transition, we need to integrate \eqref{eq:ex_rothe_int} in time over $\eta \in (0,\xi) \subset (0,T)$. We obtain that
			\begin{multline}
				\label{eq:ex_rothe_int2}
				\rho c \tq^\alpha \int_0^\xi \scal{ C^{n_k}_{\overline{\galpha}_{n_k},\overline{w}_{n_k}-V_0}(\eta)}{\varphi} \di\eta + a\tq^\alpha \int_0^\xi \int_0^\eta \scal{(\overline{\galpha}_{n_k} \ast \overline{w}_{n_k})(\lceil t\rceil_{\tau}) }{\varphi} \dt\di \eta\\ + \rho c \int_0^\xi  \int_0^\eta \scal{\pdt v_{n_k}(t)}{\varphi}\dt \di \eta  
				+ \int_0^\xi \int_0^\eta \mathcal{L}\scal{\overline{v}_{n_k}(t)}{\varphi}\dt \di\eta\\
				= \int_0^\xi \int_0^\eta \scal{\overline{F}_{n_k}(t)}{\varphi}\dt \di\eta.
			\end{multline}
			Using the previously obtained results, we can make the limit transition in \eqref{eq:ex_rothe_int2} to receive that
			\begin{multline}
				\label{eq:ex_rothe_int_lim}
				\rho c \tq^\alpha \int_0^\xi \scal{\left(\galpha \ast (\pdt v-V_0)\right)(\eta)}{\varphi} \di \eta  + a\tq^\alpha \int_0^\xi \int_0^\eta \scal{\left(\galpha \ast \pdt v\right)(t)}{\varphi} \dt \di \eta \\+ \rho c  \int_0^\xi  \int_0^\eta \scal{\pdt v(t)}{\varphi}\dt \di \eta    
				+ \int_0^\xi \int_0^\eta \mathcal{L}\scal{v(t)}{\varphi}\dt \di\eta\\ = \int_0^\xi \int_0^\eta \scal {F(t)}{\varphi}\dt \di\eta.
			\end{multline}
			Differentiation of \eqref{eq:ex_rothe_int_lim} with respect to $\xi$ yields
			\begin{multline}
				\label{eq:ex_rothe_int_lim_diff}
				\rho c \tq^\alpha \scal{\left(\galpha \ast (\pdt v-V_0)\right)(\xi)}{\varphi}
				+a \tq^\alpha \int_0^\xi  \scal{\left(\galpha \ast \pdt v)\right)(t)}{\varphi} \dt\\  + \rho c  \int_0^\xi \scal{\pdt v(t)}{\varphi}\dt + \int_0^\xi \mathcal{L}\scal{v(t)}{\varphi}\dt  = \int_0^\xi \scal{F(t)}{\varphi}\dt.
			\end{multline}
			From \eqref{eq:ex_rothe_int_lim_diff}, it follows that $\lim_{\xi \searrow 0} \scal{\left(\galpha \ast (\pdt v - V_0)\right)(\xi)}{\varphi} = 0. $
			Hence, differentiating \eqref{eq:ex_rothe_int_lim_diff} again w.r.t. $\xi$ we obtain that 
			\begin{multline*}
				\rho c \tq^\alpha \inpd{ \pdt \left(\galpha \ast \left(\pdt v- V_0\right)\right)(\xi)}{\varphi} + a\tq^\alpha \scal{\left(\galpha \ast \pdt v\right)(\xi)}{\varphi} \\ + \rho c \scal{\pdt v(\xi)}{\varphi} + \mathcal{L}\scal{v(\xi)}{\varphi} = \scal{F(\xi)}{\varphi}.
			\end{multline*}
			Therefore, $v$ satisfies equation \eqref{eq:VF}.
		\end{proof} 
		\subsection{Uniqueness} \label{sec_unique}
		
	In this subsection, we show the uniqueness of the solution to problem \eqref{eq:problem} under the additional assumption that $\pdt u \in \lpkIX{\infty}{\lp{2}}$.
This assumption is reasonable considering the regularity of the solution to the SPL problem in one dimension, see Subsection~\ref{subsec_sepvar}. 

\begin{theorem}[Uniqueness SPL]\label{thm:uniqueness}%
			The weak solution to problem \eqref{eq:VF} satisfying $u \in \lpkIX{\infty}{\hko{1}}\cap \cIX{\lp{2}}$, $\pdt u \in \lpkIX{\infty}{\lp{2}}$ and  $\pdt\left(\galpha \ast \left(\pdt u - V_0 \right)\right) \in \lpkIX{2}{{\hko{1}}^\ast}$  is unique.
\end{theorem}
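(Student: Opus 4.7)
The plan is a standard energy argument using $w$ itself as a test function, combined with a one-step time integration of the equation to compensate for the fact that the more natural test function $\pdt w$ need not lie in $\hko{1}$. Let $u_1,u_2$ be two weak solutions in the stated class and set $w:=u_1-u_2$. By linearity the data $U_0,V_0,F$ cancel, so $w(0)=0$ in $\cIX{\lp{2}}$, $\pdt w\in\lpkIX{\infty}{\lp{2}}$, and
\begin{equation*}
\rho c\tq^\alpha\inpd{\pdt\left(\galpha\ast\pdt w\right)(t)}{\varphi}+a\tq^\alpha\scal{\left(\galpha\ast\pdt w\right)(t)}{\varphi}+\rho c\scal{\pdt w(t)}{\varphi}+\mathcal{L}\scal{w(t)}{\varphi}=0
\end{equation*}
for every $\varphi\in\hko{1}$ and a.a.\ $t\in\Iopen$.

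The first step I would carry out is integrating this identity in time from $0$ to $\eta\in(0,T]$. Since $\pdt(\galpha\ast\pdt w)\in\lpkIX{2}{{\hko{1}}^\ast}$, and because $(\galpha\ast\pdt w)(0)=0$ and $w(0)=0$, this yields the lower-order relation
\begin{equation*}
\rho c\tq^\alpha\left(\galpha\ast\pdt w\right)(\eta)+a\tq^\alpha\int_0^\eta\left(\galpha\ast\pdt w\right)(s)\ds+\rho c\,w(\eta)+\int_0^\eta\mathfrak{L}w(s)\ds=0
\end{equation*}
in ${\hko{1}}^\ast$. The regularity $\pdt w\in\lpkIX{\infty}{\lp{2}}$ makes $w$ absolutely continuous into $\lp{2}$; combined with $w(0)=0$, Lemma~\ref{lem:pdtconv} gives $(\galpha\ast\pdt w)(\eta)=\pdt(\galpha\ast w)(\eta)$, and commuting convolutions together with $w(0)=0$ rewrites the integral term as $(\galpha\ast w)(\eta)$. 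I would then test the resulting identity with $\varphi=w(\eta)\in\hko{1}$ and integrate over $\eta\in(0,\xi)$ for arbitrary $\xi\in(0,T]$.

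The crucial observation is that each of the four terms produced is nonnegative. For the first, Young's inequality yields $\galpha\ast\pdt w\in\lpkIX{\infty}{\lp{2}}$, hence $\galpha\ast w\in\Hi^1((0,T),\lp{2})$ and Lemma~\ref{lemma:g}(v) delivers
\begin{equation*}
\int_0^\xi\scal{\pdt(\galpha\ast w)(\eta)}{w(\eta)}\di\eta\geq\frac{g_\alpha(T)}{2}\int_0^\xi\nrm{w(\eta)}^2\di\eta.
\end{equation*}
The second term is nonnegative by the strong positive definiteness in Lemma~\ref{lemma:g}(iv), and the third is manifestly nonnegative. For the fourth, introduce $W(\eta):=\int_0^\eta w(s)\ds\in\hko{1}$; linearity and symmetry of $\mathcal{L}$ together with $W(0)=0$ give
\begin{equation*}
\int_0^\xi\mathcal{L}\scal{W(\eta)}{w(\eta)}\di\eta=\int_0^\xi\mathcal{L}\scal{W(\eta)}{\pdt W(\eta)}\di\eta=\frac{1}{2}\mathcal{L}\scal{W(\xi)}{W(\xi)}\geq 0
\end{equation*}
by the ellipticity \eqref{eq:LLell}. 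Since these four nonnegative quantities sum to zero, the third summand $\rho c\int_0^\xi\nrm{w(\eta)}^2\di\eta$ vanishes for every $\xi\in(0,T]$, forcing $w\equiv 0$.

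The hard part will be justifying the regularity steps rather than the energy estimate itself: the time integration of the identity in ${\hko{1}}^\ast$, the Bochner version of Lemma~\ref{lem:pdtconv}, and verifying the hypotheses of Lemma~\ref{lemma:g}(v) with $v=w$. A secondary subtlety is that, because the natural choice $\varphi=\pdt w$ is blocked by the lack of spatial regularity, testing with $w(\eta)$ makes the elliptic contribution control the time-antiderivative $W(\xi)$ rather than $w$ directly; the symmetry of $\mathcal{L}$ keeps that piece nonnegative, while the coercivity that actually yields uniqueness is supplied by the zeroth-order dissipation $\rho c\,\pdt w$ after integration in time.
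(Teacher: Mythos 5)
Your argument is correct and is essentially the paper's proof in different packaging: integrating the equation once in time and then testing with $w(\eta)$ is, by Fubini, the same computation as the paper's Ladyzhenskaya-type choice of test function $\varphi=\int_t^s u(\tau)\,\dtau$ in the original identity, and you invoke exactly the same ingredients (the vanishing of $\left(\galpha\ast\pdt w\right)$ at $t=0$ via the $\Leb^\infty$ bound on $\pdt w$, Lemma~\ref{lem:pdtconv}, Lemma~\ref{lemma:g}(iv)--(v), and the symmetry and ellipticity of $\mathcal{L}$). The four nonnegative terms you obtain coincide with those in the paper's proof, so nothing essential is missing.
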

		\begin{proof}
			Let $u_1$ and $u_2$ be solutions to the problem \eqref{eq:VF} and consider their difference $u = u_1 - u_2.$ Then $u$ is a solution to the homogeneous problem (i.e. $F = 0$ and $U_0 = 0 = V_0$) with vanishing Dirichlet boundary conditions. By Lemma~\ref{lem:pdtconv}, the function $u$ satisfies
			\begin{multline} 
				\label{eq:exweaku}
				\rho c\tq^\alpha \inpd{\pdt \left(\galpha \ast \pdt u\right)(t)}{\varphi} + a \tq^\alpha \scal{\pdt \left(\galpha \ast u \right)(t)}{\varphi} \\ + \rho c \scal{\pdt u(t)}{\varphi} + \mathcal{L}\left(u(t),\varphi\right) = 0,
			\end{multline}
			for all $\varphi \in \hko{1}.$ Note that we can not choose $\varphi = \pdt u(t)$ as $\pdt u(t) \not\in \hko{1}.$ We use a method due to Ladyzhenskaya (see \cite{Ladyzenskaya1958} and \cite[p.202]{Valli2020}). Fix $s\in(0,T)$ and let $\varphi = v(t)$ where $v(t)$ is given by
			\[ 
			v(t) = \begin{cases}
				\int_t^s u(\tau)\dtau & \text{ if } 0 \leq t \leq s \\
				0 &\text{ otherwise}.
			\end{cases} 
			\] 
			The crucial property of $v$ is that $v^\prime(t) = -u(t)$ if $t\leq s.$ We substitute $\varphi = v(t)$ in \eqref{eq:exweaku} and integrate $t$ over $(0,s)$ to obtain that
			\begin{multline*}
				\rho c \tq^\alpha \int_0^s \inpd{\pdt\left(g_{\alpha}\ast \pdt u\right)(t)}{\int_t^s u(\tau)\dtau}\dt  \\ + a\tq^\alpha \int_0^s \scal{\pdt\left(g_{\alpha} \ast u \right)(t)}{\int_t^s u(\tau)\dtau}\dt+ \rho c \int_0^s\scal{\pdt u(t)}{\int_t^s u(\tau)\dtau}\dt\\ + \int_0^s \mathcal{L}\left(u(t), \int_t^s u(\tau)\dtau\right) = 0.
			\end{multline*}
The additional assumption on $\pdt u$ implies that  
\begin{equation*}
\lim\limits_{t\underset{>}{\rightarrow}0} \nrm{\left(\galpha \ast \pdt u\right)(t)} \le \lim\limits_{t\underset{>}{\rightarrow}0} \left(\galpha \ast \nrm{\pdt u}\right)(t)  \le C\lim\limits_{t\underset{>}{\rightarrow}0} \int_0^t (t-s)^{-\alpha}  = 0.
\end{equation*}
			Hence, partial integration in the first fractional term yields
			\begin{align*}
				& \int_0^s \inpd{\pdt \left(\galpha \ast \pdt u\right)(t)}{\int_t^s u(\tau)\dtau}\dt \\
				& \quad = \left.\scal{\left(\galpha \ast \pdt u\right)(t)}{\int_t^s u(\tau)\dtau}\right\rvert_{t=0}^{t=s} + \int_0^s \scal{\left(\galpha \ast \pdt u\right)(t)}{u(t)}\dt \\ 
				& \quad = \int_0^s \scal{\pdt\left(\galpha \ast u\right)(t)}{u(t)}\dt \\
				& \quad \geq \frac{T^{-\alpha}}{2\Gamma(1-\alpha)}\int_0^s \nrm{u(t)}^2\dt, 
			\end{align*}
			where the last inequality follows from Lemma~\ref{lemma:g}(v). 
			Analogously, for the second fractional term, partial integration shows that
			\begin{equation*} 
				\int_0^s \scal{\pdt\left(\galpha \ast u\right)(t)}{\int_t^s u(\tau)\dtau}\dt = \int_0^s \scal{\left(\galpha \ast u\right)(t)}{u(t)}\dt \geq 0,
			\end{equation*} 
			which is positive by Lemma~\ref{lemma:g}(iv). 
			Since
			\begin{align*}
				\frac{\mathrm{d}}{\dt} \scal{u(t)}{\int_t^s u(\tau)\dtau} &= \scal{\pdt u(t)}{\int_t^s u(\tau)\dtau} + \scal{u(t)}{\pdt \int_t^s u(\tau)\dtau }\\ &= \scal{\pdt u(t)}{\int_t^s u(\tau)\dtau} - \nrm{u(t)}^2,
			\end{align*}
			we find that
			\begin{align*}
				\int_0^s  \scal{\pdt u(t)}{\int_t^s u(\tau)\dtau}\dt &= \int_0^s \frac{\mathrm{d}}{\dt} \scal{u(t)}{\int_t^s u(\tau)\dtau}\dt + \int_0^s \nrm{u(t)}^2\dt \\ &=
				- \scal{u(0)}{\int_0^s u(\tau)\dtau} + \int_0^s \nrm{u(t)}^2\dt,
			\end{align*}
			and therefore
			\begin{equation*}
				\rho c \int_0^s \scal{\pdt u(t)}{\int_t^s u(\tau)\dtau}\dt  = \rho c \int_0^s \nrm{u(t)}^2\dt. 
			\end{equation*}
			Next, we have that
			\begin{align*}
				a \int_0^s \scal{u(t)}{\int_t^s u(\tau)\dtau}\dt &= -a \int_0^s \scal{-u(t)}{\int_t^s u(\tau)\dtau}\dt \\ & = -\frac{a}{2}\int_0^s \pdt \nrm{\int_t^s u(\tau)\dtau}^2 \dt  \\ & = \frac{a}{2} \nrm{\int_0^s u(t)\dt}^2 \geq 0.
			\end{align*}
			Similarly, by the symmetry of $\matrix{k}$, we obtain that
			\begin{align*}
				\int_0^s \scal{\matrix{k} \nabla u(t)}{\int_t^s \nabla u(\tau)\dtau} &= -\int_0^s \scal{-\matrix{k} \nabla u(t)}{\int_t^s \nabla u(\tau)\dtau}\\ &= -\frac{1}{2} \int_0^s \pdt \scal{\matrix{k} \int_t^s \nabla u(\tau)\dtau}{\int_t^s \nabla u(\tau)\dtau} \\ & \geq \frac{\widetilde{k}}{2} \nrm{\int_0^s\nabla u(t)}^2\dt.
			\end{align*}
			Summarising, we obtain the inequality
			\[
			\rho c \left(1 + \tq^\alpha \frac{T^{-\alpha}}{2\Gamma(1-\alpha)}\right) \int_0^s \nrm{u(t)}^2\dt
			+ \frac{\widetilde{k}}{2} \nrm{\int_0^s \nabla u(t)\dt }^2 \leq 0.
			\]
			Therefore, $u= 0$ a.e. in $Q_T$, which shows that $u_1 = u_2$ a.e. in $Q_T$. 
		\end{proof}
\begin{remark}
From theoretical viewpoint, Theorem~\ref{thm:uniqueness} stays valid when the condition on $\nrm{\pdt u(t)}$ is replaced by 
	\[
 \nrm{\pdt u(t)} \le C \left(1+t^{-\beta}\right),\quad 0 < \beta < \min\left\{\half,1-\alpha\right\}.
\]
\end{remark}
		
		\section{Conclusion} \label{sec_conclusion} 
In this article, we have investigated the existence and uniqueness of a weak solution to the fractional single-phase-lag heat equation. Rothe's method has been employed to show the existence of a solution satisfying $u \in \lpkIX{\infty}{\hko{1}}\cap \cIX{\lp{2}}$ and $\pdt u \in \lpkIX{2}{\lp{2}}$. Under the additional assumption $\pdt u \in \lpkIX{\infty}{\lp{2}}$, the uniqueness of a solution has been shown by contradiction. Moreover, we have derived an explicit solution to the one-dimensional problem. Bounds on the solution and its time derivative have been obtained by generalising the uniform boundedness property of the multinomial Mittag-Leffler function. Future work will concern the existence of a unique solution to the DPL model corresponding with \eqref{eq:tzoumodel_simp}. 
		
		\bibliography{refs}
		\bibliographystyle{abbrv}
\end{document}